\title{\vspace*{-1pc}%
       The Cuntz-Pimsner extension and mapping cone exact sequences}
\author{
Francesca Arici\dag, Adam Rennie\ddag\\[3pt]
\dag Institute for Mathematics, Astrophysics and Particle Physics, FNWI,\\ Radboud University Nijmegen, Heyendaalseweg 135, 6525AJ Nijmegen, The Netherlands
\\[3pt]
\ddag School of Mathematics and Applied Statistics,
University of Wollongong,\\
Northfields Ave 2522, Australia}
\def\section{\@startsection{section}{1}{\z@}{-3.5ex plus -1ex minus
  -.2ex}{2.3ex plus .2ex}{\large\bf}}
\def\subsection{\@startsection{subsection}{2}{\z@}{-3.25ex plus -1ex
  minus -.2ex}{1.5ex plus .2ex}{\normalsize\bf}}
\numberwithin{equation}{section} 
\theoremstyle{plain} 
\newtheorem{thm}{Theorem}[section]
\newtheorem{lemma}[thm]{Lemma}
\newtheorem{prop}[thm]{Proposition}
\newtheorem{ass}{Assumption}
\newtheorem{ex}[thm]{Example}
\newtheoremstyle{dotless}{}{}{\itshape}{}{\bfseries}{}{ }{}
\theoremstyle{dotless}
\newtheorem*{thm*}{Theorem}
\theoremstyle{definition} 
\theoremstyle{remark} 
\newtheorem{rmk}[thm]{Remark}
\DeclareMathOperator{\End}{End}   
\DeclareMathOperator{\Id}{Id}     
\newcommand{\phimod}{\Xi_A} 
\newcommand{\Fock}{\F_E} 
\newcommand{\algFock}{F_E^{\textnormal{alg}}} 
\newcommand{\Pim}{O_E} 
\newcommand{\Toe}{\mathcal{T}_{E}} 
\newcommand{\core}{\Pim^\gamma} 
\newcommand{\al}{\alpha}      
\newcommand{\C}{\mathbb{C}}   
\newcommand{\D}{\mathcal{D}}  
\newcommand{\F}{\mathcal{F}}  
\renewcommand{\H}{\mathcal{H}}  
\newcommand{\K}{\mathrm{End}^0_A}
\newcommand{\N}{\mathbb{N}}   
\newcommand{\ox}{\otimes}     
\newcommand{\hox}{\hat\otimes} 
\renewcommand{\O}{\mathcal{O}}  
\newcommand{\R}{\mathbb{R}}   
\renewcommand{\S}{\mathcal{S}} 
\newcommand{\T}{\mathcal{T}} 
\newcommand{\Z}{\mathbb{Z}}   
\renewcommand{\tilde}{\widetilde}
\newcommand{\Mult}{\textup{Mult}}
\newcommand{\Cs}{$C^*$-}
\newcommand{\smeb}{self-Morita equivalence bimodule}
\newcommand{\ev}{\textup{ev}}
\newcommand{\bott}{\textup{Bott}}
\newcommand{\BndE}{\mathrm{End}^*_A(E)}
\newcommand{\ei}{\mathrm{e}}
\newcommand{\stroke}{\mathbin|}     
\def\pairL_#1(#2|#3){{}_{#1}(#2\stroke#3)} 
\def\pairR(#1|#2)_#3{(#1\stroke#2)_{#3}} 
\def\scal<#1|#2>{\langle#1\stroke#2\rangle} 
\newcommand{\topop}{\mathfrak{q}} 
\newbox\ncintdbox \newbox\ncinttbox 
\begin{document}

\maketitle

\vspace{-2pc}

\begin{abstract}
For Cuntz-Pimsner algebras of bi-Hilbertian bimodules of finite Jones-Watatani index
satisfying some side conditions,
we give an explicit isomorphism between the $K$-theory exact sequences of the mapping cone
of the inclusion of the coefficient algebra into a Cuntz-Pimsner algebra, and the Cuntz-Pimsner
exact sequence. In the process we extend some results by the second author and collaborators from finite projective bimodules to finite index bimodules,
and also clarify some aspects of Pimsner's `extension of scalars' construction. 
\end{abstract}

\medskip
\emph{Mathematics Subject Classification (2010)}: 19K35, 46L08; 58B34, 55R25.

\emph{Keywords}: Pimsner algebras, $KK$-theory, circle actions, mapping cones. 


\parskip=6pt
\parindent=0pt


\section{Introduction}
\label{sec:intro}
Mapping cones play an important role in studying the properties of 
$KK$-theory, \cite{CuSk86,MNest}, and have likewise been used to further
the study of non-commutative topology and dynamics \cite{CPR1,Putnam}.
The aim of this note is to make explicit, in a specific case, the abstract relationship between 
extensions of $C^*$-algebras and mapping cone extensions. 

Motivated by direct calculations with mapping cone and Cuntz-Pimsner exact sequences,
as in \cite{ABL,Arici}, we investigate the relationship between the defining extension  of
the Cuntz-Pimsner algebra $O_E$
\begin{equation}
\tag{\ref{eq:ext}}
\xymatrix{0 \ar[r] & \K(F_{E}) \ar[r] & \Toe \ar[r]^{\pi} & \Pim \ar[r] & 0,}
\end{equation}
(here $F_E$ is the Fock module, $\T_E$ the Toeplitz-Pimsner algebra and $\K$ denotes the algebra of compact endomorphisms)
 and the exact sequence of the mapping cone $M(A,O_E)$ of the inclusion of the 
 coefficient algebra $A$ into $O_E$ 
 \[
 \xymatrix{0 \ar[r] &\S \Pim \ar[r]& M(A,\Pim) \ar[r]& A \ar[r]& 0},
 \]
 where $\S \Pim$ is the suspension.
We show that we can construct an explicit isomorphism of
the associated $K$-theory sequences at the level of unbounded $KK$-cycles.
 
Abstractly, the existence of such an isomorphism follows from the fact that the $KK$-category 
is a triangulated category whose exact triangles are mapping cone triangles, with isomorphisms 
given by $KK$-equivalence (cf. \cite{MNest}). Indeed, 
for every semi-split extension with quotient map 
$\pi$, by \cite{CuSk86}, one has an isomorphism 
of triangles making the extension triangle equivalent 
to the mapping cone triangle of $\pi$, i.e. one has a commutative diagram of triangles 
where all ``vertical'' arrows are $KK$-equivalences.

%
More specifically, in the case of Cuntz-Pimsner algebras, 
when the coefficient algebra $A$ is nuclear, the defining extension 
is semi-split, and hence one obtains an 
isomorphism of the extension triangle with the mapping cone triangle for $\pi$
\begin{equation}
\label{eq:triang-pi}
 \xymatrix{\S\Pim \ar[r] &M(\Toe, \Pim) \ar[r]& \T_E \ar[r]^{\pi}& O_E }.
\end{equation}
Using the $KK$-equivalence between $A$ and $\Toe$ and the natural 
Morita equivalence between $A$ and $\K(\F_E)$, one can show that the mapping cone triangle
\begin{equation*}
\xymatrix{\S O_E \ar[r]& M(A,O_E)\ar[r]& A\ar[r]& \O_E}
\end{equation*}
for the inclusion  of the coefficient algebra $A$ into $O_E$
is in turn isomorphic to \eqref{eq:triang-pi}. This follows 
from the axioms of a triangulated category which imply that the 
mapping cone of $A\to O_E$ is unique up to a (non-canonical) isomorphism in $KK$. 
Combining the two isomorphisms of triangles, one obtains the isomorphism of exact triangles 
\[
\xymatrix{\S \Pim \ar[r]\ar[d]^{=}& M(A,\Pim) \ar[d]\ar[r]& A \ar[r]\ar[d]^{\alpha}& \Pim \ar[d]^{=}\\
\S \Pim \ar[r]& \K(\F_E) \ar[r]& \Toe \ar[r]& \Pim }
\]
which induces an isomorphism of the corresponding $KK$-exact sequences.  

In this paper we provide the isomorphism between the associated six-term exact sequences 
explicitly at the level of unbounded $KK$-cycles. This allows one to 
exploit these mapping cones in concrete computations. We indicate how this works in
the case of $C^*$-algebras of non-singular graphs.

Many of the constructions we rely on from \cite{kappa,RRS} were
proved for finitely generated bimodules over unital algebras. In order to deal with
suspensions we extend these results to handle the more general case of
bimodules with finite right Watatani index. 
Our main result is as follows.

\begin{thm*}{\bf \ref{thm:KK-equivalence}}
Let $E$ be a bi-Hilbertian $A$-bimodule of finite right Watatani index, full as a right module
with injective left action, and satisfying Assumptions \ref{ass:one} and \ref{ass:two} 
on pages 8 and 9 respectively. Let
$(O_E,\phimod,\D)$ be the unbounded representative of the defining extension of $O_E$,
and $(M(A,O_E),\hat\Xi_A,\hat\D)$ the lift to the mapping cone. Then
$$
\cdot\ox_{M(A,O_E)}[(M(A,O_E),\hat\Xi_A,\hat\D)]:\,K_*(M(A,O_E))\to K_*(A)
$$
is an isomorphism that makes diagrams in $K$-theory commute. If furthermore the 
algebra $A$ belongs to the bootstrap class, the Kasparov product with the class
$[(M(A,O_E),\hat\Xi_A,\hat\D)] \in KK(M(A,O_E),A)$ is a $KK$-equivalence.
%
%
Together with the identity map, 
$\cdot\ox_{M(A,O_E)}[(M(A,O_E),\hat\Xi_A,\hat\D)]$ induces
an isomorphism of $KK$-theory exact sequences. 
\end{thm*}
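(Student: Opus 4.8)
The plan is to establish the isomorphism of $KK$-exact sequences by exhibiting, at the level of unbounded cycles, a single unbounded Kasparov module whose class implements the right-hand vertical arrow $\alpha\colon A \to \Toe$ (equivalently $A \to \K(\F_E)$ via the Morita equivalence), together with the identity maps on the suspension term $\S O_E$ and on $O_E$, and then checking that the resulting square commutes with the boundary maps of the two six-term sequences. Concretely, I would first recall from the earlier constructions (the extension of scalars à la Pimsner, and the material imported from \cite{kappa,RRS}) that the unbounded representative $(O_E,\phimod,\D)$ of the defining extension \eqref{eq:ext} has a canonical lift $(M(A,O_E),\hat\Xi_A,\hat\D)$ to the mapping cone: the module $\hat\Xi_A$ is the pullback of $\phimod$ along the evaluation-at-zero map, $\hat\D$ is the operator obtained by adjoining $\tfrac{1}{i}\tfrac{d}{dt}$ on the suspension coordinate to $\D$, and one must verify it is a genuine unbounded Kasparov $M(A,O_E)$--$A$ module. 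That verification — self-adjointness and regularity of $\hat\D$, compactness of the resolvent, and boundedness of commutators with a dense subalgebra of $M(A,O_E)$ — is largely inherited from the corresponding properties of $\D$ once one has extended the finite-projective arguments of \cite{kappa,RRS} to the finite-Watatani-index setting, which is done in the body of the paper under Assumptions \ref{ass:one} and \ref{ass:two}.

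Next, I would identify the three vertical maps. The map on $\S O_E$ is the identity by construction, since the kernel of $M(A,O_E)\to A$ is literally $\S O_E$ and $\hat\D$ restricted there is the suspension Dirac operator, whose class is the generator of $KK^1(\S O_E, O_E)$ implementing Bott periodicity; the map on $O_E$ is likewise the identity, because evaluation of the mapping-cone cycle at the boundary recovers $(O_E,\phimod,\D)$ itself. The content is therefore concentrated in the middle and right columns: one shows that the composite $KK(M(A,O_E),A) \ni [(M(A,O_E),\hat\Xi_A,\hat\D)]$, precomposed with the inclusion-induced class $A \to M(A,O_E)$, equals the class $\alpha \in KK(A,A)$ (an automorphism, in fact an identity after the Morita identification $\K(\F_E)\sim A$), and that precomposition with the other structure map recovers the class of $(O_E,\phimod,\D)$ up to the known $KK$-equivalence $\Toe \sim A$. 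Both identities are Kasparov-product computations with the unbounded cycles already in hand; the key algebraic input is Pimsner's $KK$-equivalence $A \sim \Toe$ together with the explicit form of the mapping-cone cycle, so that the products can be computed by an explicit unbounded Kasparov product (connection plus positivity) rather than abstractly.

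With the three vertical classes identified, commutativity of the two squares not involving the boundary maps is immediate from functoriality of the Kasparov product; commutativity of the square relating the boundary maps of the mapping-cone sequence and the Cuntz-Pimsner sequence follows from the naturality of the six-term sequence under the isomorphism of triangles, which at the level of unbounded cycles amounts to checking that the connecting cycle of the mapping cone — the extension class of $0 \to \S O_E \to M(A,O_E) \to A \to 0$ — is carried to the extension class of $0 \to \K(\F_E) \to \Toe \to O_E \to 0$ under our maps. The induced map $K_*(M(A,O_E)) \to K_*(A)$ is then an isomorphism by the five lemma applied to the morphism of six-term sequences, once one knows two of every three vertical maps are isomorphisms; and when $A$ lies in the bootstrap class, the mapping-cone $C^*$-algebra and $A$ are both in the bootstrap class, so an isomorphism on $K$-theory in all degrees (equivalently, on $K_*$ with all coefficients) upgrades to a $KK$-equivalence by the universal coefficient theorem. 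The main obstacle I anticipate is the explicit computation of the unbounded Kasparov product identifying the middle vertical class with Pimsner's equivalence $A \sim \Toe$: one must produce a correct connection on the relevant interior tensor product and verify the positivity (Connes-Skandalis) condition in the presence of the suspension variable and the finite-index (non-projective) subtleties, and it is here that Assumptions \ref{ass:one} and \ref{ass:two} and the extension of \cite{kappa,RRS} to finite Watatani index do the real work.
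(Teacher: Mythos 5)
There is a genuine gap, and it sits exactly where the paper's work is concentrated. Your argument for commutativity of the square involving the boundary maps appeals to ``naturality of the six-term sequence under the isomorphism of triangles,'' but establishing that the three vertical classes form a morphism of triangles is precisely what has to be proved; it cannot be assumed. The paper is explicit about this in the remark following the diagram in Section 4: the triangulated structure of $KK$ guarantees the \emph{existence} of some isomorphism of triangles, but not its uniqueness, and the authors leave open whether their explicit cycle $[\hat\D]$ represents the abstract class $[\tilde\alpha]\ox[u]\ox[\F_E]$. So the only non-tautological content of the theorem is the verification that the \emph{specific} class $[(M(A,O_E),\hat\Xi_A,\hat\D)]$ makes the square
$$
\xymatrix{ K_*(M) \ar[r]^{\ev_*} \ar[d]_{\cdot\ox[\hat\D]} & K_*(A) \ar[d]^{=} \\ K_*(A) \ar[r]_{1-[E]} & K_*(A)}
$$
commute (the squares on the other side commute by the defining property $j^{B*}[\hat\D]=[\D]$ of the lift). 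Your proposal defers this to ``Kasparov-product computations with the cycles in hand'' via a connection-and-positivity argument, but no such computation is set up, and it is not the route the paper takes precisely because it is unwieldy here.

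What the paper actually does, and what is missing from your proposal, is the following chain: (i) use Putnam's description of $K_0(M)$ by partial isometries $v$ over $O_E$ with range and source projections over $A$; (ii) invoke the index theorem of \cite{CPR1} to convert $[v]\ox_M[\hat\D]$ into $-\mathrm{Index}(Q_kvQ_k\colon v^*vF_E^k\to vv^*F_E^k)$ --- which requires first verifying the modularity hypotheses, done via the decomposition $\varphi(v)=\sum_{m,s}v_{m,s}$ along the projections $P_{n,r}$ of $\phimod$, the orthogonality relations among the $v_{m,s}$, and the finiteness of the sum (Lemmas 5.1--5.4, proved by a one-parameter-group argument with $\ei^{it\D}$); (iii) compute the index for each homogeneous component by a telescoping argument, reducing commutativity of the square to the explicit $A$-module isomorphisms $vv^*E^{\ox m}\simeq v^*vA$ (for $m>0$) and $v^*vE^{\ox|m|}\simeq vv^*A$ (for $m<0$), implemented by $v^*$ and $v$ respectively; and (iv) handle the odd degree by suspension, using $\S M(A,O_E)\cong M(\S A,\S O_E)$, $O_{\S E}\cong\S O_E$, and Propositions \ref{prop:suspend} and \ref{prop:suspend-assumps} to see that $\S E$ again satisfies all hypotheses, so the even argument applies verbatim. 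Your concluding steps --- the five lemma and the upgrade to a $KK$-equivalence via the UCT when $A$ is in the bootstrap class --- do match the paper, but they only become available once the square above has been shown to commute by some concrete mechanism such as (i)--(iv). (A small additional slip: there is no inclusion $A\to M(A,O_E)$ to precompose with; $A$ is the quotient of the mapping cone, not a subalgebra.)
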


{\bf Acknowledgements.} Both authors thank Magnus Goffeng,
Jens Kaad, Bram Mesland, Ryszard Nest and Aidan Sims for discussions
regarding the change of scalars argument (JK and BM) 
and other fruitful discussions. We also thank
Aidan Sims for comments on an earlier version of this article.
FA thanks Georges Skandalis for his hospitality in Paris and for useful 
comments. FA was partially supported by the GNSAGA of INdAM 
and by NWO under the VIDI-grant 016.133.326. AR acknowledges the support
of the Australian Research Council.

\section{Finite index bi-Hilbertian bimodules for non-unital algebras}
\label{sec:big-words}

We start by recalling the basic setup of \cite{RRS} and \cite{kappa}, and show how 
it extends to non-unital algebras using more refined constructions from \cite{KajPinWat}. 
In \cite{RRS} and \cite{kappa}, the basic data 
was a unital separable nuclear $C^*$-algebra $A$, and a
bi-Hilbertian bimodule $E$ over $A$ in the sense 
of \cite[Definition~2.3]{KajPinWat}, which is finitely generated and projective for 
both the right and left module structures. 

In this paper we will dispense with the unitality of the algebra $A$, and consequently
also the finitely generated and projective hypotheses on the module $E$. So
we will assume throughout the paper that $E$ is a countably generated bimodule over
$A$, which carries both left and right $A$-valued inner products ${}_A(\cdot|\cdot)$, $(\cdot|\cdot)_A$ 
for which the respective actions are injective and adjointable, and $E$ is complete. 
The two inner products automatically yield equivalent norms (see, for instance \cite[Lemma 2.2]{RRS}).
We write ${}_AE$ for $E$ when we wish to emphasise its left module structure and $E_A$ for $E$ 
when emphasising the right module structure. 

Thus a bi-Hilbertian bimodule is a special case of a \Cs correspondence $(E,\phi)$ over $A$, which 
is a right Hilbert $A$-module $E$ endowed with a $*$-homomorphism  
$\phi: A \rightarrow \BndE$, where $\BndE$ is the algebra of adjointable operators on 
$E$. For $x$ and
$y$ in $E_A$, we denote the associated rank-one 
operator by $\Theta_{x,y}:=x( y|\cdot)_A$. The 
algebra of compact operators $\End^{0}_A(E)$ is the 
closed linear span of the rank-one operators $\Theta_{x,y}$.
The algebra $\BndE$ is the multiplier algebra $\Mult(\End^0_A(E))$ of the 
compact endomorphisms $\End^0_A(E)$. 

Since $E$ is countably generated
(as a right module) there are vectors $\{e_j\}_{j\geq 1}\subset E$ such that
$$
\sum_{j\geq 1}\Theta_{e_j,e_j}={\rm Id}_E,
$$
where the convergence is in the strict topology of $\End^*_A(E)$. Such a
collection of vectors is called a frame, and \cite[Theorem 2.22]{KajPinWat} proves that
\begin{equation}
\ei^\beta:=\sum_{j\geq 1}{}_A(e_j|e_j)
\label{eq:fin-right}
\end{equation}
is a well-defined (central positive) element of the multiplier algebra of $A$ if and only if the left action of
$A$ on $E$ is by compact endomorphisms. The injectivity of the left action
which we assume ensures that $\ei^\beta$ is invertible (justifying the notation).
Equation \eqref{eq:fin-right} expresses
the finiteness of the right Watatani index of $E$, which is then independent of the choice of frame.
This finiteness condition seems to be the correct replacement
for the finitely generated hypothesis in the unital case, since a module over a unital algebra
with finite right Watatani index is finitely generated (and so projective). As further evidence for this, and for 
later use, we record the following result.

\begin{prop}
\label{prop:suspend}
Let $E$ be a bi-Hilbertian $A$-bimodule with finite right Watatani index $\ei^\beta$. Define the suspended 
bi-Hilbertian $\S A$-bimodule $\S E$ over the suspension $\S A:=C_0(\R)\ox A$ as follows.
Define $\S E:=C_0(\R)\ox E$, with the operations ($f_j,\,g_j\in C_0(\R)$,
$a_j\in A$, $e_j\in E$)
\begin{align*}
(g_1\ox a_1)\cdot(f\ox e)\cdot(g_2\ox a_2)&=g_1fg_2\ox a_1ea_2\\ 
(f_1\ox e_1|f_2\ox e_2)_{\S A}&:=f_1^*f_2\ox (e_1|e_2)_A\\
{}_{\S A}(f_1\ox e_1|f_2\ox e_2)&=f_1f_2^*\ox{}_A(e_1|e_2).
\end{align*}
Then $\S E$ has finite right Watatani index given by $1\ox \ei^\beta$ where $1\in C_b(\R)$ is
the constant function with value 1 and $\ei^\beta \in \Mult(A)$ is the right Watatani index of $E$. If $E_A$
is full so too is $\S E_{\S A}$ and if the left action of $A$ on $E$ is injective, so
too is the left action of $\S A$ on $\S E$.
\end{prop}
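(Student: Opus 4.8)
The plan is to verify each claimed property of $\S E$ by direct computation, reducing everything to the known properties of $E$ via the tensor-factor structure $\S E = C_0(\R)\ox E$. First I would check that the stated operations are well-defined and give $\S E$ the structure of a bi-Hilbertian $\S A$-bimodule: the right $\S A$-valued inner product $(f_1\ox e_1|f_2\ox e_2)_{\S A}=f_1^*f_2\ox(e_1|e_2)_A$ is $\C$-sesquilinear, positive (since $f_1^*f_1\ox(e_1|e_1)_A$ is a positive element of $C_0(\R)\ox A=\S A$ as a tensor product of positives), and $\S A$-linear in the second variable; completeness of $\S E$ in the induced norm follows because $C_0(\R)\ox E$ with this inner product is exactly the (completed) internal tensor product / the module of $C_0$-sections, which is complete. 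The analogous checks for the left inner product ${}_{\S A}(f_1\ox e_1|f_2\ox e_2)=f_1f_2^*\ox{}_A(e_1|e_2)$ are identical, using that $f_1f_2^*$ is the correct ``left'' multiplication order. The compatibility of the two inner products and of the bimodule actions reduces factorwise to the corresponding identities in $C_0(\R)$ (which are trivial since $C_0(\R)$ is commutative) and in $E$.

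Next I would compute the right Watatani index. If $\{e_j\}_{j\ge1}$ is a frame for $E_A$, one needs a frame for $\S E_{\S A}$; the natural candidate is to take an approximate unit type decomposition of $C_0(\R)$, but more cleanly one can use that for any frame $\{u_k\}$ of $C_0(\R)$ as a module over itself (e.g. arising from a partition of unity, or just observe $C_0(\R)$ is its own module with ``frame'' given by an approximate identity), the products $\{u_k\ox e_j\}$ form a frame for $\S E$. Then $\ei^\beta_{\S E}=\sum_{k,j}{}_{\S A}(u_k\ox e_j|u_k\ox e_j)=\sum_k u_ku_k^*\ox\sum_j{}_A(e_j|e_j)=1\ox\ei^\beta$, where $\sum_k u_ku_k^*=1$ strictly in $C_b(\R)=\Mult(C_0(\R))$ and $\sum_j{}_A(e_j|e_j)=\ei^\beta$ strictly in $\Mult(A)$; the two strict limits combine to a strict limit in $\Mult(\S A)=\Mult(C_0(\R)\ox A)$, giving $1\ox\ei^\beta$. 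By \cite[Theorem 2.22]{KajPinWat} (which the excerpt invokes) this shows the left action of $\S A$ on $\S E$ is by compact endomorphisms and that $\S E$ has finite right Watatani index $1\ox\ei^\beta$; independence of the frame is automatic from that theorem. The hard part of this step is being careful about strict convergence of infinite sums in a tensor-product multiplier algebra, i.e.\ justifying that the double sum can be regrouped and that the resulting element is $1\ox\ei^\beta$; this is the main technical obstacle, but it follows from standard facts about strict topology on $\Mult(C_0(\R)\ox A)$ and the fact that $\ei^\beta$ is already known to be a central multiplier of $A$.

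Finally I would dispatch the fullness and injectivity claims. Fullness of $\S E_{\S A}$: the closed span of $(f_1\ox e_1|f_2\ox e_2)_{\S A}=f_1^*f_2\ox(e_1|e_2)_A$ contains $C_0(\R)^*C_0(\R)\ox\linspan\{(e_1|e_2)_A\}$, which is dense in $C_0(\R)\ox A=\S A$ precisely because $E_A$ is full (so $\linspan(e_1|e_2)_A$ is dense in $A$) and $C_0(\R)$ is full over itself; so $\S E_{\S A}$ is full. Injectivity of the left action of $\S A$ on $\S E$: if $(g\ox a)\cdot(f\ox e)=gf\ox ae=0$ for all $f\ox e$, then choosing $f$ with $gf\ne0$ and $e$ with $ae\ne0$ (possible since the left action of $A$ on $E$ is injective and $C_0(\R)$ acts nondegenerately on itself) gives a contradiction; more precisely the left action $\phi_{\S E}=\id_{C_0(\R)}\ox\phi_E$ is injective because a tensor product of injective $*$-homomorphisms of $C^*$-algebras into $\C$-algebras of the appropriate form is injective (using that $C_0(\R)$ is nuclear, or directly that $\id\ox\phi_E$ has trivial kernel when $\phi_E$ does). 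This completes all the assertions; I expect the bimodule-axiom verification and the fullness/injectivity parts to be entirely routine, with the only genuine care needed in the strict-convergence argument for the index computation.
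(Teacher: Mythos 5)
Your proposal is correct and follows essentially the same route as the paper: the bimodule axioms, fullness, and injectivity are dispatched as routine checks, finiteness of the index is deduced from \cite[Theorem 2.22]{KajPinWat} because $\S A$ acts by compacts, and the value $1\ox\ei^\beta$ is computed from the frame $\{\sqrt{\phi_k}\ox e_j\}$ built from a partition of unity on $\R$ and a frame for $E_A$. One small caution: your parenthetical suggestion that an approximate identity of $C_0(\R)$ could serve as a ``frame'' is not right (the sum $\sum_k|u_k|^2$ would not converge to $1$); stick with the square roots of a locally finite partition of unity, which is exactly what the paper uses.
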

\begin{proof}
The proof that $\S E$ is bi-Hilbertian 
is a routine check of the conditions, and so too the statements about fullness and
injectivity. The right Watatani index must be finite by \cite[Theorem 2.22]{KajPinWat},
since $\S A$ acts by compacts on $\S E$, 
and so it only remains to determine the value of the index.

We let $\{e_j\}_{j\geq 1}$ be a (countable) frame for $E$ and 
pick a partition of unity $(\phi_k)_{k\in\Z}$ subordinate to the intervals $(k-\epsilon,1+k+\epsilon)$
for some fixed $0<\epsilon<1$. Then by a direct computation 
we find that $(\sqrt{\phi_k}\ox e_j)_{j,k}$ is a frame for 
$\S E_{\S A}$ and similarly that
\begin{equation*}
\sum_{j,k}{}_{\S A}(\sqrt{\phi_k}\ox e_j|\sqrt{\phi_k}\ox e_j)
=1\ox \ei^\beta \in \Mult (\S A) \simeq C_b(\R)\otimes \Mult (A).\qedhere
\end{equation*}
\end{proof}

An important class of examples are the
\smeb s (SMEBs) over $A$. A \smeb\  is a bi-Hilbertian $A$-bimodule for which
\[
{}_A(e|f)g = e (f|g)_A.
\] 
We do not require this compatibility condition 
in the definition of bi-Hilbertian bimodule.\footnote{Our notion of Hilbert bimodule is 
different from the one of \cite[Definition~1.8]{BMS}, which was 
used in \cite{AEE97} in the construction of generalised 
crossed products.} We will see in Proposition \ref{prop:scalars} that, 
upon changing the algebra of scalars, we can always 
construct a \smeb \ out of a bi-Hilbertian bimodule. 
This implies in particular that the Cuntz-Pimsner algebra of a 
bi-Hilbertian bimodule can always be interpreted as a 
generalised crossed product in the sense of \cite{AEE97} for a \smeb \ over a different algebra.

%
%

\section{Cuntz-Pimsner algebras}
\label{sec:CP-algs}

We start from a bi-Hilbertian $A$-bimodule $E$ with finite right Watatani index.
We assume that the left action of $A$ (which is necessarily by compacts) is also injective,
and that the right module $E_A$ is full.
Regarding $E$ as a right module with a left $A$-action by adjointable operators (a correspondence) 
we can construct
the Cuntz-Pimsner algebra $\Pim$. This we do concretely in the Fock representation. The algebraic
Fock module is the algebraic direct sum $$
F_E^{\textup{alg}}=\bigoplus_{k\geq 0}^{\textup{alg}}E^{\otimes_A k}
=\bigoplus_{k=0}^{\textup{alg}}E^{\otimes k}
=A\oplus E\oplus E^{\otimes 2}\oplus\cdots
$$
where the copy of $A$ is the trivial $A$-correspondence. The Fock module $F_E$ is the Hilbert \Cs module completion of $F^{\textup{alg}}_E$.
For $\nu\in F_E^{\textup{alg}}$, we define
the creation operator $T_\nu$ by the formula
$$
T_\nu(e_1\ox\cdots\ox e_k)=\nu\ox e_1\ox\cdots\ox e_k,\qquad e_j\in E.
$$
The expression $T_\nu$ extends to an adjointable operator on $F_E$, whose adjoint $T^*_\nu$
acts (when $\nu$ is homogenous with $\nu\in E^{\ox |\nu|}$) by
$$
T_\nu^*(e_1\ox\cdots\ox e_k)
=\left\{\begin{array}{ll} (\nu|e_1\ox\cdots \ox e_{|\nu|})_A\cdot e_{|\nu|+1}\ox\cdots \ox e_k
& k\geq |\nu|\\ 0 & {\rm otherwise}\end{array}\right.,
$$
and so is called an annihilation operator. 
The $C^*$-algebra generated by the set of creation operators $\{T_e: \;e\in E\}$ 
is the Toeplitz-Pimsner algebra $\T_E$. 
It is straightforward to show that $\T_E$ contains 
the algebra $\K(F_E)$ of compact endomorphisms on the Fock module as an ideal. The defining 
extension for the Cuntz-Pimsner algebra $\Pim$ is the short exact sequence
\begin{equation}
\label{eq:ext}
\xymatrix{0 \ar[r] & \K(F_{E}) \ar[r] & \Toe \ar[r]^{\pi} & \Pim \ar[r] & 0.}
\end{equation}
It should be noted that Pimsner \cite{Pimsner} in his general construction uses an ideal that 
in general is smaller than $\End^0_A(F_E)$. 
In our case, $A$ acts from the left on $E_{A}$ by compact endomorphisms, 
ensuring that Pimsner's ideal coincides with $\End^0_A(F_E)$. 
For $\nu\in \algFock$, we let $S_\nu$ denote the class of 
$T_\nu$ in $\Pim$. If $\nu\in E^{\otimes k}$ we write $|\nu|:=k$.

Since we assume $A$ to be separable and nuclear, by \cite[Theorem~2.7]{LledoVas} 
(see also \cite[Theorem~7.3]{Katsura}) the algebra $\Pim$ is separable 
and nuclear. By \cite[Corollary~IV.3.2.5]{Blackadar2} \Cs algebra extensions with separable 
and nuclear quotients are semi-split, hence the defining extension 
\eqref{eq:ext} is semi-split, 
i.e. it admits a completely positive cross section $s:\Pim \to \Toe$.
As a consequence, the above extension will induce six terms exact sequences in $KK$-theory. 

Using the natural Morita equivalence between $\K(F_E)$ and $A$, 
the $KK$-equivalence between $A$ and $\Toe$ proved in 
\cite[Theorem~4.4]{Pimsner} and \cite[Lemma~4.7]{Pimsner}, 
the six term exact sequences can be simplified to a great extent. 
Specialising to the case of $K$-theory we obtain
\begin{equation*}
\xymatrix{K_0(A) \ar[r]^-{1 - [E]} &K_0(A) \ar[r]^{\iota_*} &K_0(O_E) \ar[d]^{\partial}\\
K_1(O_E) \ar[u]^-{\partial}&K_1(A) \ar[l]^-{\iota_*}& K_1(A)\ar[l]^-{1 - [E]} 
}
\end{equation*}
where $\iota_*:=\iota_{A,O_E*}$ is the map in 
$K$-theory induced by the inclusion $\iota_{A,O_E}: A \hookrightarrow \Pim$ 
of the coefficient algebra into the Pimsner algebra and $1-[E]$ denotes the 
Kasparov product $\cdot \ox_A([{\rm Id}_{KK(A,A)}]-[E])$.

Similarly, the corresponding six term exact sequence for $K$-homology reads
\[
\xymatrix{K^0(A) \ar[d]_-{\partial} &K^0(A) \ar[l]_-{1 - [E]}  &K^0(O_E) \ar[l]^{\iota^*}\\
K^1(O_E) \ar[r]_-{\iota^*} &K^1(A)\ar[r]^-{1 - [E]} & K^1(A)  \ar[u]_{\partial}
}.
  \]
\subsection{Pimsner's extension of scalars}

Before tackling the extension, its $KK$-class and the relation to mapping cones, 
we examine the relationship of the Cuntz-Pimsner construction to the generalised crossed
product set up of \cite{AEE97}. Pimsner \cite{Pimsner} showed that by changing the scalars the completely
positive cross section mentioned above can be obtained explicitly, though this is 
at the expense of changing the exact sequence
\eqref{eq:ext} and the coefficient algebra. 

We will recall these constructions, and a little background, with a view to proving that
Pimsner's extension of scalars realises $O_E$ as the Cuntz-Pimsner algebra of a SMEB.
While at least some of the content of this statement is folklore, we could
find nothing more explicit than Pimsner's original construction in the literature. We 
provide both a precise statement and proof below.


The formula 
$$ 
z\cdot S_\nu:=z^{|\nu|}S_\nu, \qquad \forall\, \nu \in E^{\otimes k},
$$
is easily seen to extend to a $U(1)$-action on $\Pim$. 
We denote the fixed point algebra for this action by $\core$. 
Averaging over the circle action defines a conditional expectation 
$$
\rho:\Pim \to \core, \qquad \rho(x):=\int_{U(1)} z\cdot x\, \mathrm{d}z, 
$$
where $\mathrm{d}z$ 
denotes the normalized Haar measure on $U(1)$. 
The infinitesimal generator of the circle action 
defines a closed operator $N$ on the completion 
$X_{\core}$ of $\Pim$ as a $\core$-Hilbert module
in the inner product defined from $\rho$. Under the spectral subspace assumption 
(see \cite[Definition 2.2]{careyetal11}), $N$ is a self-adjoint, regular operator with 
locally compact resolvent 
whose commutators with $\{S_\nu: \;\nu\in \algFock\}$ are bounded. In particular, 
\begin{equation}
(\O_E,X_{\core},N)
\label{eq:circle-core}
\end{equation}
defines an unbounded $(\Pim,\core)$-Kasparov module, 
where $\O_E$ is the polynomial algebra
in the creation and annihilation operators $S_e,\,S_e^*$, $e\in E_A$. 

With these reminders in place, we turn to the extension of scalars. 
First, the SMEB case is precisely when we do not need to extend the scalars, for
those \Cs correspondences $(E,\phi)$ over $A$ for which
$\core=A$ can be characterised as follows.

\begin{prop}[cf. {\cite[Proposition~5.18]{Katsura}}]
\label{th:coresmeb}
Let $(E, \phi)$ be a \Cs correspondence over $A$ with left action given by compact operators,
and let $\Pim$ be the corresponding Pimsner algebra.
Then $E$ is a \smeb \ if and only if the fixed point algebra $\Pim^{\gamma}$ 
coincides with the coefficient algebra $A$.
\end{prop}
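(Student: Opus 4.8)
The plan is to reduce both directions to a single statement about the left action $\phi$: \emph{under the standing hypotheses ($E_A$ full, left action injective and by compacts), $E$ is a \smeb\ if and only if $\phi$ restricts to a $*$-isomorphism $A\to\KE$.} One half of this is immediate: if $E$ is a \smeb\ then ${}_A(e|f)g=e(f|g)_A$ forces $\phi({}_A(e|f))=\Theta_{e,f}$, so $\phi(A)$ is dense in, hence (being a $C^*$-subalgebra, since $\phi$ is injective) equal to, $\KE$; conversely, if $\phi\colon A\to\KE$ is an isomorphism then ${}_A(e|f):=\phi^{-1}(\Theta_{e,f})$ is an $A$-valued left inner product with ${}_A(e|f)g=\phi({}_A(e|f))(g)=\Theta_{e,f}(g)=e(f|g)_A$ and dense span $\phi^{-1}(\KE)=A$, which together with fullness of $E_A$ makes $E$ a \smeb. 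So it suffices to show that $\core=\iota_{A,\Pim}(A)$ if and only if $\phi(A)=\KE$. Throughout I would use two standard structural facts in our proper setting (cf.\ \cite{Pimsner,Katsura}): since the left action is injective and by compacts, $\Pim=\Toe/\K(F_E)$ is the Cuntz--Pimsner algebra, $\iota_{A,\Pim}$ is faithful, and the Cuntz--Pimsner relation $\iota_{A,\Pim}(a)=\psi^{(1)}(\phi(a))$ holds for \emph{every} $a\in A$, where $\psi^{(1)}\colon\KE\to\Pim$, $\Theta_{x,y}\mapsto S_x S_y^*$, is an isometric $*$-homomorphism; and $\core$ is the closed linear span of the elements $S_\mu S_\nu^*$ with $\mu,\nu\in E^{\ox k}$, $k\geq 0$.

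For the ``if'' direction, suppose $\phi(A)=\KE$. The Cuntz--Pimsner relation then gives $\iota_{A,\Pim}(A)=\psi^{(1)}(\phi(A))=\psi^{(1)}(\KE)=\overline{\linspan}\{S_xS_y^*:x,y\in E\}$. I would then show by a straightforward induction on $k$ that every spanning element $S_\mu S_\nu^*$ of $\core$ with $\mu,\nu\in E^{\ox k}$ lies in $\iota_{A,\Pim}(A)$: it suffices to treat elementary tensors $\mu=\mu'\ox x$, $\nu=\nu'\ox y$ (with $\mu',\nu'\in E^{\ox(k-1)}$, $x,y\in E$), and, writing ${}_A(x|y):=\phi^{-1}(\Theta_{x,y})\in A$, the Cuntz--Pimsner relation gives
\[
S_\mu S_\nu^*=S_{\mu'}\bigl(S_x S_y^*\bigr)S_{\nu'}^*=S_{\mu'}\,\iota_{A,\Pim}\bigl({}_A(x|y)\bigr)\,S_{\nu'}^*=S_{\mu'\cdot{}_A(x|y)}\,S_{\nu'}^*,
\]
a single spanning element of degree $k-1$, which lies in $\iota_{A,\Pim}(A)$ by the inductive hypothesis (the base case $k=0$ being trivial since $S_aS_b^*=\iota_{A,\Pim}(ab^*)$). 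Passing to closed linear spans over all $k$ then yields $\core=\iota_{A,\Pim}(A)\isom A$.

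For the ``only if'' direction, suppose $\core=\iota_{A,\Pim}(A)$. Then $S_xS_y^*\in\core=\iota_{A,\Pim}(A)$ for all $x,y\in E$, so $\psi^{(1)}(\KE)\subseteq\iota_{A,\Pim}(A)$; conversely the Cuntz--Pimsner relation gives $\iota_{A,\Pim}(A)=\psi^{(1)}(\phi(A))\subseteq\psi^{(1)}(\KE)$. Hence $\psi^{(1)}(\phi(A))=\psi^{(1)}(\KE)$, and since $\psi^{(1)}$ is injective this forces $\phi(A)=\KE$; together with injectivity of $\phi$ this makes $\phi\colon A\to\KE$ a $*$-isomorphism, so $E$ is a \smeb\ by the first paragraph.

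The step requiring the most care — and the only one that is not pure bookkeeping — is importing the two structural facts quoted in the first paragraph, above all that the Cuntz--Pimsner relation holds on \emph{all} of $A$ (this is exactly where ``left action by compacts'' is essential, ensuring that $\Toe/\K(F_E)$ coincides with the Cuntz--Pimsner algebra and that $\iota_{A,\Pim}$ is faithful). Once these are in place the two implications are the short computations above, and the remaining subtlety is merely keeping the degree-indexing straight in the induction for the ``if'' direction; I do not foresee a genuine difficulty.
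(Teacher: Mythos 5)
Your proof is correct. Note that the paper itself offers no argument for this proposition --- it is stated with a pointer to \cite[Proposition~5.18]{Katsura} --- so there is no in-text proof to compare against; what you have written is, in effect, a self-contained specialisation of Katsura's argument to the setting at hand (left action injective and by compacts, $E_A$ full), and all the ingredients you import are legitimate here: with the left action by compacts, Pimsner's ideal is all of $\K(F_E)$, so the covariance relation $\iota_{A,O_E}(a)=\psi^{(1)}(\phi(a))$ does hold for every $a\in A$, and injectivity of $\phi$ gives injectivity of $\iota_{A,O_E}$ and of $\psi^{(1)}$. Your reduction to ``$E$ is a \smeb\ iff $\phi:A\to\KE$ is a $*$-isomorphism'' is the right pivot, and it is worth making explicit the small bonus your first paragraph contains: since $\phi$ is injective and $\phi(A)\subseteq\KE=\phi\bigl(\overline{\linspan}\{{}_A(e|f)\}\bigr)$, left-fullness of the inner product is automatic, so no extra fullness hypothesis on ${}_A(\cdot|\cdot)$ needs to be assumed. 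The only cosmetic caveat is that the proposition as printed does not repeat the standing hypotheses of Section 3 (fullness of $E_A$, injectivity of $\phi$); you correctly flag that you are using them, and they are indeed in force throughout that section. The telescoping induction reducing $S_\mu S_\nu^*$ of degree $k$ to degree $k-1$ via $S_xS_y^*=\iota_{A,O_E}({}_A(x|y))$ is exactly the standard mechanism by which $\core$ collapses to $A$ in the \smeb\ case, and is consistent with the paper's later use of this structure in Proposition \ref{prop:scalars}.
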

In general, $\core$ is substantially larger than $A$ and the generator of the circle action is 
insufficient for constructing an unbounded $(\O_E,A)$-Kasparov module
representing our original extension \eqref{eq:ext}.

The unbounded Kasparov module in \eqref{eq:circle-core}
gives a class in $KK^1(\Pim,\core)$, and when $E$ is a \smeb, 
this class represents the extension \eqref{eq:ext}, \cite{RRS}.
In the more general case when $\core \neq A$, Pimsner 
considered the right $\core$-module $E' := E \otimes_{A} \core$,
\cite[pp 195-196]{Pimsner}. Under some additional assumptions this enlargement of the
scalars puts us back into the \smeb\  case, where Cuntz-Pimsner algebras are known to correspond
to the generalised crossed-products of \cite{AEE97} by \cite[Theorem 3.7]{Katsura1}.

\begin{prop}
\label{prop:scalars}
Given a correpondence $(E,\phi)$, 
suppose that the module $E_A$ is full and the left action $\phi$ is 
essential\footnote{Recall that the action of $A$ on $E$ is
said to be \emph{essential} if the linear span of $\phi(A)E$ is dense in $E$.}. 
Then the module $E' := E \otimes_{A} \core$ is a bi-Hilbertian bimodule over 
$\core$ which is left and right full and which satisfies the compatibility condition
\[
{}_A(\xi | \eta)\zeta = \xi (\eta |\zeta)_{A},
\]
hence is a \smeb \ over $\core$.
The Cuntz-Pimsner algebra $O_{E}\cong O_{E'}$ agrees with the 
generalised crossed product $\core \rtimes_{E'} \Z$.
\end{prop}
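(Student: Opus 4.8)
The plan is to realise $E'$ concretely as the degree-one spectral subspace of $O_E$ for the gauge action, so that all of its Hilbert-bimodule structure is read off from the ambient $C^*$-algebra, and then to invoke the gauge-invariant uniqueness theorem to identify $O_{E'}$ with $O_E$. Write $\psi_n\colon\K(E^{\otimes n})\to O_E$ for the canonical homomorphism $\Theta_{\mu,\nu}\mapsto S_\mu S_\nu^*$, with $\psi_0$ the inclusion $A\hookrightarrow O_E$, and let $M_n:=\{x\in O_E:z\cdot x=z^n x\}$ be the $n$-th spectral subspace, so that $O_E$ is topologically $\Z$-graded with $M_0=\core$, $M_m M_n\subseteq M_{m+n}$, $M_n^*=M_{-n}$, and $M_0=\overline{\linspan}\bigcup_{n\ge 0}\psi_n(\K(E^{\otimes n}))$. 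The first observation is that $M_1$, equipped with the right inner product $(x|y)_{\core}:=x^*y$ and the left inner product ${}_{\core}(x|y):=xy^*$, is a norm-closed $\core$-sub-bimodule of $O_E$: it is therefore complete, its $\core$-actions are adjointable, and it automatically satisfies the self-Morita compatibility ${}_{\core}(x|y)z=xy^*z=x(y|z)_{\core}$.

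Next I would identify $E'=E\otimes_A\core$ with $M_1$ by $\Psi\colon e\otimes b\mapsto S_e b$. This is well defined on the balanced tensor product since $S_{ea}=S_e a$, it preserves the right inner product since $(S_e b)^*(S_{e'}b')=b^*(e|e')_A b'$, and its range is dense: any spanning element $S_\mu S_\nu^*$ of $M_1$ has $|\mu|\ge 1$, so writing $\mu=e\otimes\hat\mu$ yields $S_\mu S_\nu^*=S_e(S_{\hat\mu}S_\nu^*)$ with $S_{\hat\mu}S_\nu^*\in\core$. Hence $\Psi$ extends to an isomorphism $E'\cong M_1$ of right Hilbert $\core$-modules, along which I would transport the left $\core$-action $\phi'$ and the left $\core$-valued inner product to $E'$; the compatibility condition of the statement then holds automatically, and unwinding $\Psi$ gives $\phi'(b)(e\otimes c)=(\phi(b)e)\otimes c$ for $b\in A$.

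The heart of the argument is that $E'$ is full on both sides. For right fullness, $(E'|E')_{\core}$ corresponds to the closed linear span of $M_{-1}M_1$, which equals $\overline{\linspan}\{b^*(e|e')_A b'\}=\overline{\linspan}(\core A\core)$ because $E_A$ is full; since $\phi$ is essential one has $E^{\otimes n}=\overline{\phi(A)E^{\otimes n}}$, so the covariance identity $\psi_n(\phi_n(a)T)=a\,\psi_n(T)$ and its adjoint give $\psi_n(\K(E^{\otimes n}))\subseteq\overline{\linspan}(A\core A)$ for every $n$, whence $\overline{\linspan}(\core A\core)=\core$ by the description of $\core$ above. For left fullness, ${}_{\core}(E'|E')$ corresponds to the closed linear span of $M_1M_{-1}$, which contains $\overline{\psi_1(\K(E))}$ and hence contains $A$ by the Cuntz--Pimsner covariance relation $a=\psi_1(\phi(a))$ (valid for all $a\in A$ since the left action is injective and by compact operators); peeling the first tensor factor off $S_\mu$ shows it also contains every $\psi_n(\K(E^{\otimes n}))$, so it too exhausts $\core$. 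Therefore $E'$ is a self-Morita equivalence bimodule over $\core$, and in particular $\phi'$ is an isomorphism onto $\Kcore(E')$.

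Finally I would integrate the Toeplitz representation $(\iota,\Psi)$ of $(E',\core)$ on $O_E$, where $\iota\colon\core\hookrightarrow O_E$ is the inclusion. Since $E'$ is a self-Morita equivalence bimodule, Cuntz--Pimsner covariance reduces to the identity $\iota(b)=\Psi^{(1)}(\phi'(b))$ for all $b\in\core$; this holds because $\phi'(b)=\sum_i\Theta_{x_i,y_i}$ in $\Kcore(E')$ means $bz=\sum_i x_i y_i^* z$ for every $z\in M_1$, and left fullness then forces $b=\sum_i x_i y_i^*$ in $O_E$. The resulting $*$-homomorphism $\Phi\colon O_{E'}\to O_E$ is equivariant for the gauge actions ($\Psi$ raises degree by one, $\iota$ preserves it), is faithful on the coefficient algebra $\core$, and has image containing $\core$ and $M_1$ and hence all of $O_E$; by the gauge-invariant uniqueness theorem it is an isomorphism $O_{E'}\cong O_E$. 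As $E'$ is a self-Morita equivalence bimodule, \cite[Theorem~3.7]{Katsura1} identifies $O_{E'}$ with the generalised crossed product $\core\rtimes_{E'}\Z$, completing the proof. I expect the main obstacle to be the two fullness statements: they amount to showing that the closed ideals $\overline{\linspan}(\core A\core)$ and $\overline{M_1M_{-1}}$ exhaust $\core$, and this is exactly where the essentiality of $\phi$ (for right fullness) and the injectivity and compactness of the left action (for left fullness, through the covariance relation) become indispensable, together with the description of $\core$ as the closed span of the $\psi_n(\K(E^{\otimes n}))$ and the possibility of peeling tensor factors off the $S_\mu$ inside $O_E$.
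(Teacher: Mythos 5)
Your proposal is correct, and while it shares the paper's central device---reading all of the bimodule structure of $E'=E\ox_A\core$ off the ambient algebra via $e\ox f\mapsto S_ef$---it executes the key steps quite differently. The paper stops short of fully identifying $E'$ with the spectral subspace $M_1$: it defines the left action and left inner product through the inclusion into $O_E$ but proves the two fullness statements by explicit approximation, using a Raeburn--Williams sequence $y_j$ with $\sum_j(y_j|y_j)_Ab\to b$ for right fullness and a frame $(x_i)$ for $E_A$ with $\sum_iS_{x_i}S_{x_i}^*S_\mu S_\nu^*=S_\mu S_\nu^*$ for left fullness. You instead argue structurally from the description of $\core$ as $\overline{\linspan}\bigcup_n\psi_n(\K(E^{\ox n}))$: right fullness becomes $\overline{\linspan}(M_{-1}M_1)=\overline{\linspan}(\core A\core)\supseteq\core$ via essentiality, and left fullness follows from the covariance relation $a=\psi_1(\phi(a))$ together with peeling off tensor factors. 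Both routes use the hypotheses in the same places, but yours makes completeness, adjointability and the compatibility condition ${}_{\core}(x|y)z=x(y|z)_{\core}$ entirely automatic, and---a genuine addition---you actually prove the final claim $O_{E'}\cong O_E$ via the gauge-invariant uniqueness theorem, which the paper asserts without argument. Two small points to tidy: the inclusion $\overline{\linspan}(A\core A)\subseteq\overline{\linspan}(\core A\core)$ needs a word about approximate identities (or Cohen factorisation) in the non-unital case, and the relation $a=\psi_1(\phi(a))$ in $\T_E/\K(F_E)$ rests on $\phi(a)$ being compact (so that $a-\psi_1(\phi(a))=P_0aP_0\in\K(F_E)$) rather than on injectivity of $\phi$; injectivity is only needed so that $A$ embeds in $O_E$. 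Neither affects the validity of the argument.
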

We again thank Jens Kaad and Bram Mesland for fruitful discussions that 
lead to the formulation and proof of this result.
\begin{proof}
By its very definition, $E'$ is a right Hilbert $\core$ module, with right action and inner product given by the interior tensor product construction. In particular, the right inner $\core$-valued product is given by
\[
(e_1 \otimes f_1 \vert e_2 \otimes f_2)_{\core} 
:= f_1^* (e_1 \vert e_2)_A f_2 = (f_1 \vert (e_1|e_2)_Af_2)_{\core},
\quad e_1,\,e_2\in E,\ \ f_1,\,f_2\in O_E^\gamma.
\]
If the left action of $A$ on $E$ is
essential and the right inner product is full, then $E'$ is a right-full Hilbert 
$\core$-module by the following argument. 
Using the right fullness of $E_A$, 
\cite[Lemma 5.53]{RaeburnW} shows that there 
exists a sequence $y_{j}\in E$ such that for all $b\in A$ 
\[
\lim_{k\to\infty}\sum_{j=0}^{k} ( y_{j}|y_{j})_A b=b,
\]
and thus because the left $A$ action is essential 
$\lim_{k\to\infty}\sum_{j=0}^{k} ( y_{j}|y_{j})_A y=y$ for all $y\in E$.
Now let $S_{\mu_{1}\cdots \mu_{n}}S_{\nu_{1}\cdots\nu_{n}}^{*}\in \core$. We want to show 
that this element of the fixed point algebra $\core$ can be approximated by inner products. By rewriting
the inner product
\begin{align*}
( S_{y_{j}}S_{\nu_{1}\cdots \nu_{n}}S_{\mu_{1}\cdots\mu_{n}}^{*}\,|\,S_{y_{j}})_{\core}
&=S_{\mu_{1}\cdots \mu_{n}}S_{y_{j}\nu_{1}\cdots\nu_{n}}^{*}S_{y_{j}}
=S_{\mu_{1}\cdots \mu_{n}}(S_{y_{j}}^{*}S_{y_{j}}S_{\nu_{1}\cdots \nu_{n}})^{*}\\
&=S_{\mu_{1}\cdots \mu_{n}}(\,( y_{j}|y_{j})_A S_{\nu_{1}\cdots \nu_{n}})^{*}
=S_{\mu_{1}\cdots \mu_{n}}(S_{( y_{j}|y_{j})_A \nu_{1}\cdots \nu_{n}})^{*},
\end{align*}
we see that
\[
\lim_{k\to\infty}\sum_{j=0}^{k}( S_{y_{j}}S_{\nu_{1}\cdots \nu_{n}}S_{\mu_{1}\cdots\mu_{n}}^{*}|S_{y_{j}})_{\core}
=\lim_{k\to\infty}\sum_{j=0}^{k}S_{\mu_{1}\cdots \mu_{n}}(S_{( y_{j}|y_{j})_A \nu_{1}\cdots \nu_{n}})^{*}
=S_{\mu_{1}\cdots \mu_{n}}S_{\nu_{1}\cdots \nu_{n}}^{*},
\]
and so $E\otimes_{A}\core$ is right full.

The non-trivial part is the left module structure. 
We define a left action $\tilde{\phi}: \core \to \End^*_{\core}(E')$ 
by using the natural inclusion $E' \hookrightarrow \Pim$ given 
on simple tensors by $e\otimes f \mapsto S_e \cdot f$, $e\in E$ and $f\in O_E^\gamma$.
The core $\core$ is generated by elements of the form $S_{\bf{\mu}}S_{\bf{\nu}}^*$, with 
$\vert \mu \vert = \vert \nu \vert =n$. Such elements act on simple tensors by
\[
\tilde{\phi}(S_{\mu}S_{\nu}) (e\otimes f) 
=\mu_1 \otimes (S_{\mu_2 \cdots \mu_n} S^*_{\nu_2\cdots\nu_{n}} (\nu_{1}\vert e)_A f),
\quad e\in E,\ f\in O_E^\gamma,
\]
since $S_{\mu_2 \cdots \mu_n} S_{\nu_2\cdots\nu_{n}}^{*}$ 
is again an element of the fixed point algebra $\Pim^\gamma$.

In order to define a left inner product, we again use the above identification and define
\[
_{\core}(e_1 \otimes f_1 \vert e_2\otimes f_2) := S_{e_1}f_1 f_2^* S_{e_2}^*.
\]
We now show this  inner product is left-full. This can be done by 
choosing a frame $(x_{i})_{i=1}^N$ for $E_A$ ($N$ can be infinity).
Then
\[
\sum_{i=1}^{N} S_{x_{i}}S^{*}_{x_{i}}S_{\mu_{1}\cdots \mu_{n}}S_{\nu_{1}\cdots \nu_{n}}^{*}
= S_{\mu_{1}\cdots \mu_{n}}S_{\nu_{1}\cdots \nu_{n}}^{*},
\]
and at the same time writing $\nu=\nu_1\overline{\nu}$ and $\mu=\mu_1\overline{\mu}$ we have
\[
S_{x_{i}}S^{*}_{x_{i}}S_{\mu_{1}\cdots \mu_{n}}S_{\nu_{1}\cdots \nu_{n}}^{*}
= S_{x_i}{}_{\core}( \, (x_i|\mu_1)_A\,| S_{\overline{\nu}}S_{\overline{\mu}}^*)S_{\nu_1}^*
={}_{\core}(x_i\ox(x_i|\mu_1)_A\,|\,\nu_1\ox S_{\overline{\nu}}S_{\overline{\mu}}^*),
\]
which shows that the left inner product is full.
We conclude by checking the compatibility condition by computing that
\begin{align*}
\tilde{\phi}\left(_{\core}(e_1 \otimes f_1 \vert e_2\otimes f_2)\right) e_3 \otimes f_3
& = \tilde{\phi}(S_{e_1}f_1 f_2^* S_{e_2}^*)e_3 \otimes f_3 
=e_1 \otimes (f_1 f_2^* S_{e_2}^* S_{e_3} f_3)  \\
& = e_1 \otimes f_1 \,(f_2^*  S_{e_2}^* S_{e_3}  f_3) 
=e_1 \otimes f_1\,(e_2\otimes f_2 \vert e_3 \otimes f_3)_{\core}.\qedhere
\end{align*}
\end{proof}

\subsection{The extension class}

A Kasparov module representing the class of the extension \eqref{eq:ext}
was constructed in \cite{RRS}, under the assumption that $A$ is unital and $E$ finitely generated,
and a further assumption discussed below. 
Here we recall the salient points, and extend the discussion to handle the non-unital situation. 

So we suppose that $E$ is a bi-Hilbertian $A$-bimodule with finite right Watatani index, full
as a right module and with injective left action of $A$.
We choose a frame $(e_i)_{i\geq 1}$ for $E_A$. 
The frame $(e_i)_{i\geq 1}$ induces a frame for $E^{\ox_A k}$, namely
$(e_\rho)_{|\rho|=k}$ where $\rho$ is a multi-index and $e_\rho=e_{\rho_1}\ox\cdots\ox e_{\rho_k}$.
We define 
$$
\Phi_k:\End_A^{00}(E^{\ox_A k})\to A,
\qquad \Phi_k(T)=\sum_{|\rho|=k}{}_A(Te_\rho|e_\rho).
$$
Here $\End_A^{00}(E^{\ox_A k})$ denotes the finite rank 
operators on $E^{\ox_A k}$. It follows from \cite[Lemma 2.16]{KajPinWat}
that $\Phi_k$ does not depend on the choice of frame and 
extends to a norm continuous map on $\End^0_A(E^{\ox k})$, \cite[Corollary 2.24]{KajPinWat}. 
By  \cite[Proposition 2.27]{KajPinWat}, the functionals $\Phi_k$ extend to strictly continuous maps
$\Phi_k:\End^*_A(E^{\ox k})\to \Mult(A)$.

In particular, we denote by 
$\mathrm{e}^{\beta_k}$ the element 
$\Phi_k({\rm Id}_{E^{\ox_A k}}) = \sum_{|\rho|=k}{}_A(e_\rho|e_\rho)\in {\rm Mult}(A)$. 
Since $\Phi_k$ is independent of the
choice of frame, so is $\mathrm{e}^{\beta_k}$. Note that $\mathrm{e}^{\beta_k}$ is a 
positive, central, invertible element of $\Mult(A)$, \cite[Corollaries 2.24, 2.28]{KajPinWat}. 
Therefore $\beta_k$ is a well-defined 
self-adjoint central element in $\Mult(A)$.

We further extend the functional $\Phi_k$ to $\Phi_{k}: \End^*_A(F_E)\to {\rm Mult}(A)$ 
by defining $\Phi_k(T):=\Phi_k(P_kTP_k)$ for $T\in \End^*_A(F_E)$,
where $P_k:F_E\to E^{\ox k}$ is the projection. 
Naively, we would like to define
\begin{equation}
\label{phiinftydef}
\Phi_\infty(T)\,``\!:=\!"\,{\rm res}_{s=1}\sum_{k=0}^\infty\Phi_k(T)\mathrm{e}^{-\beta_k}(1+k^2)^{-s/2},
\quad\mbox{for suitable $T\in \End_A^*(F_E)$.}
\end{equation}
Indeed, $\Phi_k(T)\mathrm{e}^{-\beta_k}$ is easily 
shown to be bounded, and  so it is tempting to try to
define $\Phi_\infty$ using some `generalised residue'
in the sense of generalised limits and Dixmier traces. In general, 
problems arise since $\Phi_\infty$ (if well-defined) is
not a numerical functional, but $A$-valued. Worse still, in the non-unital setting we
only have the strict continuity of the $\Phi_k$ in general. The lack of norm continuity
is handled as follows.

\begin{lemma}
\label{lem:toeplitz-compact}
Suppose that $T\in \T_E\subset \End_A^*(F_E)$. Then for each $k=0,\,1,\,2,\dots$,
the compression $P_kTP_k$ is a compact endomorphism on $E^{\ox k}$, and hence
$\Phi_k:\T_E\to A$ is norm continuous.
\end{lemma}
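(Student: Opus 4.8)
The plan is to reduce the assertion to a computation on the monomials spanning a dense subalgebra of $\T_E$, together with one structural fact about the tensor powers of $E$.

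First I would recall the standard description of $\T_E$ as the closed linear span of the operators $T_\mu T_\nu^*$ with $\mu\in E^{\ox p}$, $\nu\in E^{\ox q}$ and $p,q\geq 0$, where $E^{\ox 0}:=A$ and $T_a:=\phi_{F_E}(a)$ is left multiplication by $a\in A$ on $F_E$; this is obtained by collapsing words in the creation and annihilation operators using the relation $T_f^*T_g=\phi_{F_E}((f|g)_A)$ and the fact that $\phi_{F_E}(A)$ absorbs the creation operators. Since $T\mapsto P_kTP_k$ is norm decreasing and $\End^0_A(E^{\ox k})$ is norm closed in $\End^*_A(E^{\ox k})$, it then suffices to prove $P_k(T_\mu T_\nu^*)P_k\in\End^0_A(E^{\ox k})$ for a single monomial. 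A degree count shows that $T_\mu T_\nu^*$ maps $E^{\ox k}$ into $E^{\ox(k-q+p)}$ and annihilates $E^{\ox k}$ when $k<q$, so the compression vanishes unless $p=q\leq k$; and when $p=q\leq k$ a direct computation on elementary tensors identifies $P_k(T_\mu T_\nu^*)P_k$ with $\Theta_{\mu,\nu}\ox\Id_{E^{\ox(k-q)}}$ acting on $E^{\ox k}=E^{\ox q}\ox_A E^{\ox(k-q)}$.

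The heart of the matter, and the point where the non-unital case genuinely departs from \cite{RRS,kappa}, is therefore to show that $\Theta_{\mu,\nu}\ox\Id_{E^{\ox(k-q)}}\in\End^0_A(E^{\ox k})$ for $\mu,\nu\in E^{\ox q}$, since $\Id_{E^{\ox(k-q)}}$ is no longer a compact endomorphism. I would first record the auxiliary fact that the left action of $A$ on each tensor power $E^{\ox m}$ is by compact endomorphisms, which is \cite[Theorem~2.22]{KajPinWat} (equivalently, the index $\mathrm{e}^{\beta_m}$ of $E^{\ox m}$ lies in $\Mult(A)$) and can also be seen by an induction on $m$. In the inductive step one writes the left action $\phi(a)\ox\Id_{E^{\ox m}}$ on $E^{\ox(m+1)}$ as a norm limit of finite sums $\sum_i\Theta_{x_i,y_i}\ox\Id_{E^{\ox m}}$ with $x_i,y_i\in E$ and, after polarisation, reduces to the compactness of $R_xR_x^*=\Theta_{x,x}\ox\Id_{E^{\ox m}}$, where $R_x\colon E^{\ox m}\to E^{\ox(m+1)}$ is the adjointable map $\eta\mapsto x\ox\eta$. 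Here $R_x^*R_x$ is left multiplication by $(x|x)_A$ on $E^{\ox m}$, compact by the inductive hypothesis, so $(R_xR_x^*)^2=R_x(R_x^*R_x)R_x^*$ is a norm limit of finite-rank operators, and hence $R_xR_x^*\in\End^0_A(E^{\ox(m+1)})$ since $R_xR_x^*\geq0$. Running the same $R^*R$-versus-$RR^*$ argument with $R_\mu\colon E^{\ox(k-q)}\to E^{\ox k}$, $\eta\mapsto\mu\ox\eta$ — using the auxiliary fact for $E^{\ox(k-q)}$ to see that $R_\nu^*R_\nu$ is left multiplication by the compact element $(\nu|\nu)_A$ — and polarising in $(\mu,\nu)$ would then give $\Theta_{\mu,\nu}\ox\Id_{E^{\ox(k-q)}}=R_\mu R_\nu^*\in\End^0_A(E^{\ox k})$, proving the first assertion.

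For the last sentence, $\Phi_k$ restricted to $\T_E$ is by construction the map $T\mapsto\Phi_k(P_kTP_k)$, which by the above takes values in $\End^0_A(E^{\ox k})$; on $\End^0_A(E^{\ox k})$ the functional $\Phi_k$ is the norm-continuous $A$-valued map of \cite[Corollary~2.24]{KajPinWat}, so composing with the norm-decreasing map $T\mapsto P_kTP_k$ yields a norm-continuous map $\T_E\to A$. I expect the main obstacle to be this auxiliary fact on tensor powers, together with stating cleanly the elementary observation that $R^*R$ compact forces $RR^*$ compact; the remaining steps are routine bookkeeping with degrees and with the defining relations of $\T_E$.
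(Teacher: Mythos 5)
Your argument is correct and its skeleton is exactly the paper's: approximate $T$ in norm by finite sums of monomials $T_\mu T_\nu^*$ with homogeneous legs, note that $P_k(T_\mu T_\nu^*)P_k$ vanishes unless $|\mu|=|\nu|\leq k$, and check compactness of the surviving compressions, after which norm continuity of $\Phi_k$ follows from \cite[Corollary~2.24]{KajPinWat}. The one point of genuine divergence is the key compactness step: the paper disposes of it by citing \cite[Corollary~3.7]{Pimsner} together with the injectivity (hence, via \cite[Theorem~2.22]{KajPinWat}, compactness) of the left action, whereas you prove it from scratch by identifying the compression with $\Theta_{\mu,\nu}\ox\Id_{E^{\ox(k-q)}}=R_\mu R_\nu^*$ and running the ``$R^*R$ compact $\Rightarrow$ $RR^*$ compact'' argument, preceded by an induction showing that $A$ acts by compacts on every tensor power $E^{\ox m}$. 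That induction is needed and correct (compactness of $\phi(a)$ on $E$ does \emph{not} formally pass to $\phi(a)\ox\Id$ on $E\ox_A F$, so your frame-approximation plus polarisation is the right way to propagate it), and the square-root trick $(RR^*)^2=R(R^*R)R^*$ with $RR^*\geq 0$ is sound. In short: same decomposition and degree count, but your version is self-contained and makes explicit where the finite-index hypothesis enters, at the cost of being longer than the paper's citation-based step.
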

\begin{proof}
We approximate $T\in \T_E$ in norm by a finite sum of generators $T_\xi T_\eta^*$ for $\xi,\,\eta\in F_E$
homogenous. If $|\xi|\neq |\eta|$ then $P_kT_\xi T_\eta^*P_k=0$, and so we suppose that
$|\xi|= |\eta|$.

In that case, for $k<|\xi|$ we again have $P_kT_\xi T_\eta^*P_k=0$, while for $k\geq |\xi|$
the endomorphism $P_kT_\xi T_\eta^*P_k$ coincides with a compact endomorphism of
$E^{\ox k}$ by \cite[Corollary 3.7]{Pimsner} and the injectivity of the left action of $A$. Since
$P_kTP_k$ is approximated in norm by finite sums of endomorphisms $P_kT_\xi T_\eta^*P_k$,
$P_kTP_k$ is a compact endomorphism of $E^{\ox k}$.
\end{proof}

Thus for ${\rm Re}(s)>1$, since $\Vert\Phi_k(T)\mathrm{e}^{-\beta_k}\Vert\leq \Vert T\Vert$, the map
$$
\T_E\ni T \mapsto \sum_{k=0}^\infty\Phi_k(T)\mathrm{e}^{-\beta_k}(1+k^2)^{-s/2}
$$
is norm continuous. The only remaining problem with  
the tentative definition in Equation \eqref{phiinftydef} is the existence of the residue.
Following \cite{RRS}, we work under the following 
assumption guaranteeing that the residue exists for 
$T\in\T_E$.
\begin{ass}
\label{ass:one}
We assume that for every $k\in \N$, there is a $\delta>0$ such that whenever 
$\nu\in E^{\otimes k}$ there exists a $\tilde{\nu}\in E^{\otimes k}$ satisfying
$$
\Vert \mathrm{e}^{-\beta_n}\nu \mathrm{e}^{\beta_{n-k}}-\tilde\nu\Vert 
= O(n^{-\delta}),\quad\mbox{as $n\to \infty$}.
$$
\end{ass}
When Assumption \ref{ass:one} holds,
Equation \eqref{phiinftydef} defines an $A$-bilinear functional $\Phi_\infty:\Toe\to A$,
which is a continuous $A$-bilinear positive expectation, which in addition vanishes on 
$\K(E)\subset \Toe$. Hence $\Phi_\infty$ descends to a positive $A$-bilinear expectation
$\Phi_\infty:O_E\to A$.
The details of this construction can be found in \cite[Section 3.2]{RRS}, and the only change in the non-unital case is the norm continuity, which follows from Lemma \ref{lem:toeplitz-compact}.
This functional furnishes us with an $A$-valued inner product
$(S_1|S_2)_A:=\Phi_\infty(S_1^*S_2)$ on $O_E$, and the completed module
is denoted $\phimod$. 

{\bf We assume that Assumption \ref{ass:one} holds for the remainder of the paper.}

\begin{thm}[Theorem 3.14 of \cite{RRS}]
\label{thm:RRS-ext} If the bi-Hilbertian bimodule $E$ satisfies
Assumption \ref{ass:one}, then the tuple $(O_E,\phimod,2Q-1)$ is an 
odd Kasparov module representing
the class of the extension \eqref{eq:ext}. The projection $Q$ 
has range isometrically isomorphic to the Fock module $F_E$.
\end{thm}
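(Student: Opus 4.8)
The plan is to check, one by one, the three defining properties of an odd Kasparov $(O_E,A)$-module for the triple $(O_E,\phimod,2Q-1)$, and then to match its class with that of \eqref{eq:ext}. Two of the three properties are essentially free once Assumption \ref{ass:one} is in force: $\phimod$ is a right Hilbert $A$-module because its inner product $(S_1|S_2)_A=\Phi_\infty(S_1^*S_2)$ is positive and $A$-valued, $\Phi_\infty$ being a positive $A$-bilinear expectation, and left multiplication gives a $*$-representation of $O_E$ by adjointable operators, the adjoint of $S_\mu$ being $S_\mu^*$ by $A$-bilinearity of $\Phi_\infty$. Since $Q$ is a projection, $F:=2Q-1$ is self-adjoint with $F^2=1$, so $F^2-1$ is trivially compact. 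Thus the content reduces to (i) producing the projection $Q$ with $\range Q\cong F_E$, and (ii) showing $[F,S_e]$ and $[F,S_e^*]$ are compact endomorphisms of $\phimod$ for $e\in E$, which by density suffices for all of $O_E$.

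For (i) I would first show that $\nu\mapsto S_\nu$, for $\nu\in\algFock$, extends to an isometric embedding $F_E\hookrightarrow\phimod$. For homogeneous $\nu_1,\nu_2$ of equal degree one has $S_{\nu_1}^*S_{\nu_2}=(\nu_1|\nu_2)_A\in A$, whence $(S_{\nu_1}|S_{\nu_2})_A=\Phi_\infty((\nu_1|\nu_2)_A)=(\nu_1|\nu_2)_A$ since $\Phi_\infty$ restricts to the identity on $A$; for unequal degrees the element $S_{\nu_1}^*S_{\nu_2}$ has nonzero gauge degree, so $P_kS_{\nu_1}^*S_{\nu_2}P_k=0$ for every $k$ and $\Phi_\infty(S_{\nu_1}^*S_{\nu_2})=0$. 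Hence the closure of $\{S_\nu:\nu\in\algFock\}$ is an isometric copy of $F_E$, which gives the last sentence of the theorem. Lifting a frame $(e_\rho)$ of $F_E$ to the vectors $(S_{e_\rho})$, I would define $Q=\sum_\rho\Theta_{S_{e_\rho},S_{e_\rho}}$ as a strict limit; verifying strict convergence to an adjointable projection onto this copy of $F_E$ is the first technical point, and uses the structure of $\Phi_\infty$.

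For (ii) I would exploit the grading. Left multiplication by $S_e$ sends $S_\nu$ to $S_{e\ox\nu}$, again a creation operator, so $S_e$ preserves $\range Q$ and $(1-Q)S_eQ=0$; consequently $[Q,S_e]=QS_e(1-Q)$, whose adjoint is the ``defect'' $R:=(1-Q)S_e^*Q$. On each homogeneous component of degree $\geq 1$ the operator $S_e^*S_\nu$ is again a creation operator (of degree $|\nu|-1$), so $R$ is concentrated on the degree-zero component: $R=S_e^*P_0^Q$, where $P_0^Q$ projects $\range Q$ onto its copy of $A$. The heart of the matter is that $R\in\End^0_A(\phimod)$, and this is where the present non-unital setting genuinely departs from \cite{RRS}: in the unital finitely generated case $P_0^Q$ has ``rank one'' and $R$ is manifestly finite rank, whereas for non-unital $A$ the degree-zero block is the full standard module $A_A$ and $P_0^Q$ alone is not compact. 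I expect this to be the main obstacle. The resolution uses that finite right Watatani index forces the left action $\phi\colon A\to\End^0_A(E)$ to take values in compacts with $\ei^\beta$ invertible, together with the decay estimate of Assumption \ref{ass:one}: composing $S_e^*$ with $P_0^Q$ inserts the compact operators $\phi(\cdot)$ and the renormalising factors $\ei^{-\beta_k}$, and one checks that $R$ is then a norm-convergent sum of rank-one endomorphisms. The norm continuity of the $\Phi_k$ on $\Toe$ supplied by Lemma \ref{lem:toeplitz-compact} is exactly what licenses the passage from the strict estimates of \cite{KajPinWat} to honest norm control, so that the argument of \cite[Section 3.2]{RRS} carries over.

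Finally, to identify the class I would use the semi-split structure of \eqref{eq:ext} together with the Morita equivalence $\K(F_E)\sim A$, so that \eqref{eq:ext} determines a class in $KK^1(O_E,\K(F_E))\cong KK^1(O_E,A)$ through its Busby invariant $O_E\to\End^*_A(F_E)/\K(F_E)$, which sends $S_e$ to the class of the Toeplitz operator $T_e\in\Toe$. Under the identification $\range Q\cong F_E$ the compressions $QS_eQ$ and $QS_e^*Q$ act as $T_e$ and $T_e^*$, while the off-diagonal corners $(1-Q)S_eQ=0$ and $(1-Q)S_e^*Q=R$ are compact by (ii). Hence $a\mapsto QaQ$ is a completely positive lift of this Busby invariant, and $(\phimod,2Q-1)$ with the $O_E$-action is the corresponding Stinespring--Fredholm representative. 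Therefore $[(O_E,\phimod,2Q-1)]$ equals the class of \eqref{eq:ext} in $KK^1(O_E,A)$, completing the proof.
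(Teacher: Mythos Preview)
The paper does not give its own proof of this theorem: it is imported verbatim as Theorem~3.14 of \cite{RRS}, with the single remark in the preceding paragraph that ``the only change in the non-unital case is the norm continuity, which follows from Lemma~\ref{lem:toeplitz-compact}.'' So you are not competing with a proof in the paper, but reconstructing the argument that \cite{RRS} carries out for unital $A$ and indicating where non-unitality intervenes.

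Your outline is faithful to that argument: the embedding $\nu\mapsto S_\nu$ is isometric by $A$-bilinearity and gauge-invariance of $\Phi_\infty$, the projection $Q$ is built from a lifted frame, the commutator $[Q,S_e]$ reduces to the single off-diagonal corner $R=(1-Q)S_e^*P_0^Q$, and the extension class is identified by recognising $Q(\,\cdot\,)Q$ on $\range Q\cong F_E$ as the Toeplitz representation and hence as a Stinespring lift of the Busby invariant. The paper's one-line reduction and your more explicit localisation of the difficulty agree.

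Your sketch of why $R$ is compact in the non-unital case is the one place that is imprecise. The mechanism is not really that ``$\phi(\cdot)$ and the renormalising factors $\ei^{-\beta_k}$'' get inserted; it is simply that $\Phi_\infty$ takes values in $A$ rather than merely in $\Mult(A)$, which is exactly what Lemma~\ref{lem:toeplitz-compact} buys you. Concretely, set $c:=\Phi_\infty(S_eS_e^*)\in A$. Using $A$-bilinearity of $\Phi_\infty$ and the already-noted fact $QS_e^*|_A=0$, one finds $R^*R=P_0^Q S_eS_e^*P_0^Q$, which acts as $a\mapsto c\,a$ on the copy of $A$ and vanishes on $A^\perp$; thus $R^*R=\Theta_{c^{1/2},\,c^{1/2}}$ is a rank-one endomorphism of $\phimod$, and hence $R\in\K(\phimod)$. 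Replacing your heuristic paragraph by this computation makes the non-unital step transparent and aligns your sketch with what the paper is implicitly invoking when it says that Lemma~\ref{lem:toeplitz-compact} is the only new ingredient.
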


\begin{ex}
\label{smebexample}
When $E$ is a \smeb, $\Phi_{\infty}:O_{E}\to A$ coincides with 
the expectation $\rho:O_{E}\to \core$ discussed prior to Equation \eqref{eq:circle-core}. Therefore
$$
\phimod=\bigoplus_{n\in\Z}E^{\ox n}
$$
with the convention that $E^{\ox(-|n|)}=\overline{E}^{\ox |n|}$, 
where $\overline{E}$ is the conjugate module, which agrees 
with the \Cs algebraic dual of $E$. In this case we can define the {\bf number operator} $N$ 
on the module $\phimod$ by $N\rho=n\rho$ for $\rho\in E^{\ox n}$. 
Then $(\O_E, {\phimod},N)$ is an unbounded 
Kasparov module representing the class of the extension \eqref{eq:ext} in $KK^1(O_E,A)$, by
\cite[Theorem 3.1]{RRS}.
\end{ex}

In \cite{kappa}, under an additional assumption, 
Theorem \ref{thm:RRS-ext} was extended, presenting an unbounded representative of the class defined by
$(O_E,\phimod,2Q-1)$. In order to construct the unbounded representative $\D$ we need an additional assumption on
the bimodule. Under Assumption \ref{ass:one}, we can define the 
operator $\topop_k:E^{\otimes k}\to E^{\otimes k}$ by
$$
\topop_k\nu:=\tilde{\nu}=\lim_{n\to\infty}\mathrm{e}^{-\beta_n}\nu \mathrm{e}^{\beta_{n-k}}.
$$
By \cite[Lemma 2.2]{kappa}, each $\topop_k$ is adjointable for both module structures,
a bimodule map and positive. 
Then in order to construct $\D$ we need to assume
\begin{ass}
\label{ass:two}
For any $k$, we can write $\topop_k=c_kR_k=R_{k}c_{k}$ where 
$R_{k}\in \End^{*}_{A}(E^{\otimes k})$ is a projection and $c_k$ 
is given by left-multiplication by an element in $\Mult(A)$. 
\end{ass}

Both Assumptions \ref{ass:one} and \ref{ass:two} hold for a wide variety of examples,
as shown in \cite{RRS} and \cite{kappa}.

When $A$ is unital and Assumption 2 holds, \cite[Theorem 2.10]{kappa} proves that the 
module $\phimod$ decomposes as a direct sum of 
bi-Hilbertian $A$-bimodules $\Xi_{n,r}$ of finite right Watatani index. 

To check this in the non-unital case means computing the index
directly in terms of the frame for $\phimod$ presented in 
\cite[Lemmas 2.8, 2.9]{kappa}. The construction of the frame begins with
a frame $\{e_j\}_{j\geq 1}$ for $E_A$ and a frame $\{f_k\}_{k\geq 1}$ for ${}_AE$,
and produces a frame $\{W_{e_\rho,c_{|\sigma|}^{-1/2}f_\sigma}\}_{\rho,\sigma}\subset \phimod$ for
multi-indices $\rho,\,\sigma$, and where $c_{|\sigma|}$ is as in Assumption \ref{ass:two}. 
For fixed values of $|\rho|,\,|\sigma|$ we have
\begin{align*}
\sum_{|\rho|=r,\,|\sigma|=s}{}_A\Big(W_{e_\rho,c_{s}^{-1/2}f_\sigma}\Big|W_{e_\rho,c_{s}^{-1/2}f_\sigma}\Big)
&=\sum_{|\rho|=r,\,|\sigma|=s}\Phi_\infty(S_{e_\rho}S^*_{c_{s}^{-1/2}f_\sigma}
S_{c_{s}^{-1/2}f_\sigma}S_{e_\rho}^*)\nonumber\\
&=\sum_{|\rho|=r,\,|\sigma|=s}\Phi_\infty(S_{e_\rho}(c_{s}^{-1/2}f_\sigma|
c_{s}^{-1/2}f_\sigma)_AS_{e_\rho}^*)\nonumber\\
&\leq\Vert c_{s}^{-1}\Vert \sum_{|\rho|=r,|\sigma|=s}\Phi_\infty(S_{e_\rho}(f_\sigma|f_\sigma)_AS_{e_\rho}^*)
\nonumber\\
&\leq\Vert c_{s}^{-1}\Vert \sum_{|\rho|=r}\Phi_\infty(S_{e_\rho}\ell_sS_{e_\rho}^*)
\leq\Vert c_{s}\Vert^{-1} \ell_s\sum_{|\rho|=r}\Phi_\infty(S_{e_\rho}S_{e_\rho}^*)\nonumber\\
&=\Vert c_{s}\Vert^{-1}\, \ell_s\, \ei^{\beta_r}
\end{align*}
where $\ell_s$ is the {\em left numerical} Watatani index of $E^{\ox s}$, which is finite by 
\cite[Theorem 4.8]{KajPinWat}.
This computation shows (in particular) that the summands $\Xi_{n,r}$ in the decomposition
\begin{equation}
\phimod=\bigoplus_{n\in\Z,\ r\geq\max\{0,n\}}\Xi_{n,r}
\label{eq:en-ar}
\end{equation}
are bi-Hilbertian $A$-bimodules of finite right Watatani index,
and we denote the projections onto these sub-modules by $P_{n,r}$.

Then one defines $\D=\sum_{n,r}\psi(n,r)P_{n,r}$ 
where $\psi$ is a suitable function, \cite[Definition 2.12]{kappa}. 
By \cite[Lemma~2.14]{kappa} the projection $Q$ appearing in 
Theorem~\ref{thm:RRS-ext} has the form
\[
Q=\sum_{n=0}^{\infty} P_{n,n},
\]
with respect to the above decomposition \eqref{eq:en-ar}.

\begin{rmk}
Note that in the \smeb \ case, a suitable choice of the operator $\D$ along with the decomposition of
the module $\phimod$ coincides with the number operator and the decomposition described in 
Example \ref{smebexample}.
\end{rmk}

{\bf We assume that Assumption \ref{ass:two} holds for the remainder of the paper.}

\begin{thm}[Theorem 2.16 of \cite{kappa}]
\label{thm:kappa-ext} If the bi-Hilbertian $A$-bimodule $E$ satisfies
Assumptions \ref{ass:one} and \ref{ass:two}, 
then the tuple
$(O_E,\phimod,\D)$ is an odd unbounded Kasparov module representing
the class of the extension \eqref{eq:ext}. The spectrum of $\D$ can be chosen to consist of
integers with  bi-Hilbertian $A$-bimodule eigenspaces of finite right Watatani index, 
and non-negative spectral projection  $Q$.
\end{thm}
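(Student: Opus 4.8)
The plan is to realise $\D=\sum_{n,r}\psi(n,r)P_{n,r}$ with $\psi$ the integer-valued function of \cite[Definition 2.12]{kappa}, to verify the three axioms of an odd unbounded Kasparov module for $(O_E,\phimod,\D)$ directly, and then to identify its class in $KK^1(O_E,A)$ with that of the bounded module $(O_E,\phimod,2Q-1)$ of Theorem \ref{thm:RRS-ext}. Self-adjointness and regularity are immediate: the $P_{n,r}$ are mutually orthogonal projections summing strictly to $\Id_{\phimod}$ and the $\psi(n,r)$ are real, so $\D$ is a diagonal operator, essentially self-adjoint on the algebraic span of the ranges of the $P_{n,r}$ and regular, with spectrum the closure of $\{\psi(n,r)\}\subset\Z$. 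That the eigenspaces are bi-Hilbertian $A$-bimodules of finite right Watatani index is exactly the index estimate displayed before \eqref{eq:en-ar}.

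The substance of the proof is the bounded-commutator condition. It suffices to show $[\D,S_e]$ and $[\D,S_e^*]$ extend to adjointable operators for each generator $S_e$, $e\in E_A$, since the remaining elements of the polynomial algebra $\O_E$ are built from these by the $A$-action and products, on which one applies the derivation property. The creation operator $S_e$ raises the grading index $n$ by one, so it carries $\Xi_{n,r}$ into a finite sum of summands $\Xi_{n+1,r'}$, and
\[
[\D,S_e]=\sum_{n,r,r'}\bigl(\psi(n+1,r')-\psi(n,r)\bigr)\,P_{n+1,r'}\,S_e\,P_{n,r};
\]
adjointability thus reduces to the uniform bound $\sup|\psi(n+1,r')-\psi(n,r)|<\infty$ taken over the indices for which $P_{n+1,r'}S_eP_{n,r}\neq 0$. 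This uniform estimate is exactly what $\psi$ is built to satisfy in \cite[Definition 2.12]{kappa}, and it rests on Assumption \ref{ass:two}, which expresses the transition operators $\topop_k$ through the multiplier $c_k$ and projection $R_k$ that govern how $S_e$ moves between submodules. I would import that estimate, the only non-unital adjustment being that the frames are now countable and the $c_k$ act by multipliers, so the finite-rank bounds of \cite{kappa} are replaced by the strict-continuity bounds of \cite[Proposition 2.27]{KajPinWat} together with the norm continuity furnished by Lemma \ref{lem:toeplitz-compact}.

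For the compactness axiom I would prove that $a(\D+i)^{-1}$ is compact for each $a\in O_E$ by approximating $a$ in norm by finite sums of monomials $S_\mu S_\nu^*$ and treating $S_\mu S_\nu^*(\D+i)^{-1}$ separately. Each compression $P_{n',r'}S_\mu S_\nu^*P_{n,r}$ is a compact endomorphism because $\Xi_{n,r}$ has finite right Watatani index, while properness of $\psi$ (its sublevel sets are finite) forces $\|(\psi(n,r)+i)^{-1}\|\to 0$ as the indices grow; the grading-shift structure of $S_\mu S_\nu^*$ then renders the sum defining $S_\mu S_\nu^*(\D+i)^{-1}$ norm-convergent with compact partial sums, hence compact. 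The finiteness of the Watatani index of the $\Xi_{n,r}$, established before \eqref{eq:en-ar}, is precisely what stands in for the finitely generated projective hypothesis of \cite{kappa} here.

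It remains to match the class. By \cite[Lemma 2.14]{kappa} the non-negative spectral projection of $\D$ is $Q=\sum_{n\geq 0}P_{n,n}$, so $Q=\chi_{[0,\infty)}(\D)$ and $2Q-1$ is the phase of $\D$, with the finitely many finite-index summands in $\ker\D$ assigned to $Q$. The bounded transform $\D(1+\D^2)^{-1/2}$ represents the same $KK^1$-class as $(O_E,\phimod,\D)$, and the diagonal difference $(2Q-1)-\D(1+\D^2)^{-1/2}$ has coefficients tending to zero as the indices grow together with a single finite-index contribution from $\ker\D$, so multiplication by any $a\in O_E$ is compact by the finite-index estimate used above. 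Therefore $(O_E,\phimod,\D(1+\D^2)^{-1/2})$ and $(O_E,\phimod,2Q-1)$ define the same class, which by Theorem \ref{thm:RRS-ext} is that of the extension \eqref{eq:ext}. The only genuinely hard step is the uniform increment bound on $\psi$ in the commutator estimate; everything else is diagonal bookkeeping or a transcription of the unital arguments of \cite{kappa}, with norm continuity from Lemma \ref{lem:toeplitz-compact} replacing the finitely generated hypothesis.
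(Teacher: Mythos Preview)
Your proposal is correct and follows essentially the same approach as the paper. The paper itself does not reprove the theorem but cites \cite{kappa} and only records the one non-unital adjustment---that the resolvent is merely locally compact, shown exactly as you do by noting that each compression $P_{m,s}SP_{n,r}$ is compact (cf.\ Lemma~\ref{lem:toeplitz-compact}) and that the eigenvalues $\psi(n,r)$ accumulate only at $\pm\infty$; your fuller sketch of the remaining steps (diagonal self-adjointness/regularity, the $\psi$-increment bound for commutators, and the phase identification with $2Q-1$) faithfully reconstructs the argument of \cite{kappa} that the paper is invoking.
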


The only difference arising in the non-unital case is that the resolvent of $\D$ is
not compact, but only locally compact. This follows since, 
just as in Lemma \ref{lem:toeplitz-compact}, the compression $P_{m,s}SP_{n,r}$
of  $S\in O_E$ is a compact endomorphism. Since the eigenvalues of
$\D$ are chosen to have $\pm$infinity as their only limit points, we find that
$S(1+\D^2)^{-1/2}$ is a norm convergent sum of compacts.

Our last task before turning to the mapping cone exact sequence is to show that 
the class of modules we consider is stable under suspension. Proposition \ref{prop:suspend}
gives us most of what we want, and we just need to check that if $E$ satisfies Assumptions
\ref{ass:one} and \ref{ass:two} then so too does $\S E$.

\begin{prop}
\label{prop:suspend-assumps} If $E$ is a bi-Hilbertian $A$-bimodule of finite right
Watatani index which is full as a right module and with injective left action satisfying
Assumptions \ref{ass:one} and \ref{ass:two}, then
the suspended module $\S E$ is a bi-Hilbertian $\S A$-bimodule of finite right
Watatani index which is full as a right module and with injective left action
satisfying
Assumptions \ref{ass:one} and \ref{ass:two}.
\end{prop}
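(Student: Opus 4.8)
The plan is to reduce everything to the corresponding data for $E$: apart from Assumptions \ref{ass:one} and \ref{ass:two}, all the asserted properties of $\S E$---bi-Hilbertian, finite right Watatani index, right fullness, injective left action---are already supplied by Proposition \ref{prop:suspend}, so the only real work is to verify those two Assumptions. The guiding observation is that, after a harmless identification, each of the operators $\mathrm{e}^{\beta_k}$, $\topop_k$, $c_k$ and $R_k$ for $\S E$ is the corresponding operator for $E$ tensored on the left with the identity of $C_0(\R)$, and that the estimates in Assumptions \ref{ass:one} and \ref{ass:two} are insensitive to this extra tensor leg.

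First I would record the natural isomorphism of $\S A$-bimodules $(\S E)^{\ox_{\S A}k}\cong C_0(\R)\ox E^{\ox_A k}$, given on simple tensors by $(f_1\ox e_1)\ox\cdots\ox(f_k\ox e_k)\mapsto f_1\cdots f_k\ox(e_1\ox\cdots\ox e_k)$; this is immediate from the construction of the interior tensor product over $\S A=C_0(\R)\ox A$, since the $C_0(\R)$-valued factors pass freely across the balanced tensor signs. Letting $(e_\rho)_{|\rho|=k}$ be a frame for $E^{\ox k}$ induced by a frame of $E_A$, and $(\phi_m)_{m\in\Z}$ the partition of unity from the proof of Proposition \ref{prop:suspend}, a short computation shows that $(\sqrt{\phi_m}\ox e_\rho)_{m,\rho}$ is a frame for $(\S E)^{\ox k}_{\S A}$ (this is where $\sum_m\phi_m=1$ is used), and hence that the Watatani functional $\Phi_k$ of $\S E$ satisfies $\Phi_k(\mathrm{id}_{C_0(\R)}\ox T)=1\ox\Phi_k(T)$ for every $T\in\End^*_A(E^{\ox k})$. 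Specialising to the identity gives that $\mathrm{e}^{\beta_k}$ for $\S E$ equals $1\ox\mathrm{e}^{\beta_k}$, so the central self-adjoint multiplier $\beta_k$ of $\S A$ is simply $1\ox\beta_k\in\Mult(\S A)$, with $\mathrm{e}^{-\beta_k}=1\ox\mathrm{e}^{-\beta_k}$; for $k=1$ this recovers the value $1\ox\ei^\beta$ of the index already computed in Proposition \ref{prop:suspend}.

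Next, for Assumption \ref{ass:one}: under the identification above, the map $\mu\mapsto\mathrm{e}^{-\beta_n}\mu\,\mathrm{e}^{\beta_{n-k}}$ on $(\S E)^{\ox k}$ is precisely $\mathrm{id}_{C_0(\R)}\ox\Theta_n$, where $\Theta_n\colon E^{\ox k}\to E^{\ox k}$, $\Theta_n(\nu):=\mathrm{e}^{-\beta_n}\nu\,\mathrm{e}^{\beta_{n-k}}$, is the corresponding adjointable map for $E$. Assumption \ref{ass:one} for $E$ provides, for each fixed $k$, a $\delta>0$ with $\Vert\Theta_n\nu-\topop_k\nu\Vert=O(n^{-\delta})$ for every $\nu\in E^{\ox k}$. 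Applying the uniform boundedness principle to the bounded operators $n^{\delta}(\Theta_n-\topop_k)$ on the Banach space $E^{\ox k}$ upgrades this to the operator-norm estimate $\Vert\Theta_n-\topop_k\Vert=O(n^{-\delta})$. Since $\Vert\mathrm{id}_{C_0(\R)}\ox S\Vert=\Vert S\Vert$ for any adjointable $S$ on $E^{\ox k}$ (a standard property of external tensor products of Hilbert modules), it follows that $\Vert(\mathrm{id}_{C_0(\R)}\ox\Theta_n)-(\mathrm{id}_{C_0(\R)}\ox\topop_k)\Vert=O(n^{-\delta})$, so Assumption \ref{ass:one} holds for $\S E$ with the same $\delta$, taking $\tilde\mu:=(\mathrm{id}_{C_0(\R)}\ox\topop_k)\mu$; in particular $\topop_k$ for $\S E$ is exactly $\mathrm{id}_{C_0(\R)}\ox\topop_k$.

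Finally, for Assumption \ref{ass:two}, one writes $\topop_k=c_kR_k=R_kc_k$ for $E$ with $R_k\in\End^*_A(E^{\ox k})$ a projection and $c_k$ left multiplication by some $a_k\in\Mult(A)$. For $\S E$ one takes the projection $\mathrm{id}_{C_0(\R)}\ox R_k\in\End^*_{\S A}((\S E)^{\ox k})$ and the operator $\mathrm{id}_{C_0(\R)}\ox c_k$, which is left multiplication by $1\ox a_k\in\Mult(\S A)$; from $\topop_k=\mathrm{id}_{C_0(\R)}\ox\topop_k$ one reads off $\topop_k=(\mathrm{id}_{C_0(\R)}\ox c_k)(\mathrm{id}_{C_0(\R)}\ox R_k)=(\mathrm{id}_{C_0(\R)}\ox R_k)(\mathrm{id}_{C_0(\R)}\ox c_k)$ on $(\S E)^{\ox k}$, which is Assumption \ref{ass:two} for $\S E$. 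I expect the single point that is not pure bookkeeping to be this passage from the element-wise decay postulated in Assumption \ref{ass:one} to one uniform over all of $(\S E)^{\ox k}=C_0(\R)\ox E^{\ox k}$; it is dealt with by the uniform boundedness principle together with the fact that the perturbing multipliers $\mathrm{e}^{\pm\beta_n}$ are constant in the $\R$-direction, so that every operator in sight has the form $\mathrm{id}_{C_0(\R)}\ox(\cdot)$ and is governed entirely by its $E^{\ox k}$-leg.
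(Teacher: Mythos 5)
Your proof is correct and takes essentially the same route as the paper, whose entire argument is the observation that everything follows from Proposition \ref{prop:suspend} together with the fact that the right Watatani index of $(\S E)^{\ox k}$ is $1\ox \ei^{\beta_k}$, so that $\topop_k$, $c_k$ and $R_k$ for $\S E$ all take the form $\mathrm{id}_{C_0(\R)}\ox(\cdot)$. The paper leaves the transfer of Assumptions \ref{ass:one} and \ref{ass:two} implicit, and your uniform-boundedness upgrade from pointwise to operator-norm decay is a valid way of supplying the one step that is not pure bookkeeping.
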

\begin{proof}
This follows from Proposition \ref{prop:suspend} and 
the fact that the right Watatani index of $(\S E)^{\ox k}$ is $1\ox \ei^{\beta_k}$ 
where $\ei^{\beta_k}$ is the 
right Watatani index of $E^{\ox k}$. 
\end{proof}

\section{Comparing the mapping cone and Cuntz-Pimsner exact sequences}
\label{sec:compare}

In addition to the defining exact sequence for $O_E$, 
we can look at the mapping cone extension for the inclusion 
$\iota_{A,O_E}: A \hookrightarrow \Pim$ of the scalars into the Cuntz-Pimsner algebra. Recall that the 
mapping cone $M(A,\Pim)$ of the inclusion $\iota_{A,O_E}$ is the \Cs algebra
\[
M(A,\Pim) := \big\lbrace f \in C([0,\infty),O_E) \ : f(0)\in A,\, f(\infty) =0,\,f\ \mbox{continuous}.\big\rbrace
\]
We will frequently abbreviate $M(A,\Pim)$ to $M$.
The algebra $M$ fits into a short exact sequence 
involving the suspension $S\Pim \simeq C_0((0,\infty),A)$. Due to the use of
the mapping cone, we will often write suspensions as $C_0((0,\infty))\otimes\cdot$ instead
of $C_0(\R)\otimes\cdot$. The sequence is
\[
\xymatrix{0 \ar[r] &\S \Pim \ar[r]^-{j_*}& M(A,O_E)\ar[r]^-{\ev} & A \ar[r] &0,}
\]
where $\ev(f) = f(0)$ and $j(g \otimes a) (t) = g(t)a $. The mapping cone 
extension is semi-split and induces six term exact sequences in $KK$-theory. 

Specialising to $K$-theory yields the exact sequence 
\begin{equation*}
\label{eq:coneKTh}
\xymatrix{K_0(A) \ar[r]^-{\partial'} & K_0(\S\Pim) \ar[r]^-{j_*} & K_1(M) \ar[d]^-{\ev_*}\\
K_0(M) \ar[u]^{\ev_*}&K_1(\S \Pim)\ar[l]^-{j_*}&K_1(A)\ar[l]^-{\partial'}}
\end{equation*}
By \cite[Lemma~3.1]{CPR1} the boundary map $\partial' : K_j(A)\to K_{j+1}(\Pim)$
is given, up to the Bott map $\bott: K_{j}(\Pim) \to K_{j+1}(\S\Pim)$, 
by minus the inclusion of $A$ in $O_E$, i.e. $\partial' = -\bott \circ \iota_{A,O_E*}$. 
Similar considerations hold for the dual $K$-homology exact sequence.

We now compare the defining short exact sequence for $O_E$ and the
mapping cone sequence for the inclusion $\iota_{A,O_E}:A\hookrightarrow O_E$. 
To do so, we use the identification $\bott: K_{j}(\Pim) \to K_{j+1}(\S\Pim)$ 
to define a map $j_*^B: K_i(\Pim) \to K_{i+1}(M)$ 
given by $j_* \circ \bott$. Then we have the 
partial comparison with two out of three maps given by  the identity:
\[
\xymatrix{\cdots \ar[r]^{\iota_{*}}&K_0(\Pim) \ar[r]^-{j_*^B}\ar[d]^{=} &K_{1}(M) \ar[r]^-{\ev_*} \ar[d]^{?}&
K_{1}(A) \ar[r]^{\iota_{*}}\ar[d]^{=}& K_1(\Pim) \ar[r]^-{j_*^B}\ar[d]^{=} &K_{0}(M) \ar[r]^-{\ev_*} \ar[d]^{?}&
K_{0}(A) \ar[r]^{\iota_{*}}\ar[d]^{=}&\cdots \\
\cdots \ar[r]^{\iota_{*}}&K_0(\Pim) \ar[r]^-{\partial} &K_{1}(A) \ar[r]^-{1-[E]} &
K_{1}(A) \ar[r]^{\iota_{*}}&K_1(\Pim) \ar[r]^-{\partial} &K_{0}(A) \ar[r]^-{1-[E]} &
K_{0}(A) \ar[r]^{\iota_{*}}&\cdots }
\]
Thus the question we seek to address is whether there is a map
that can be put in place of $?$ which  makes the diagram commute 
(and so provide an isomorphism of six-term sequences).

\begin{rmk}
As pointed out in the introduction, the existence of an 
isomorphism between the two exact sequences 
follows from the fact that the $KK$-category is triangulated, 
with exact triangles the mapping cone triangles. The missing map can
be easily constructed as a Kasparov product with the class 
\begin{equation}
[\tilde{\alpha}]\otimes_{M_{\pi}}[u]\otimes_{\K(\Fock)}[\Fock] \in KK(M,A),
\label{eq:class}
\end{equation}
where $M_\pi$ denotes the mapping cone $M(\Toe, \Pim)$, $\tilde{\alpha}: M(A,\Pim) \to M(\Toe,\Pim)$ is the inclusion of 
mapping cones induced by the natural inclusion $\alpha:A \to \Toe$, $[\Fock] \in KK(\K(\Fock),A)$ 
is the class of the Morita equivalence, and $[u] \in KK(M(\Toe,\Pim),\K(\Fock))$ 
is the $KK$-equivalence given by \cite[Corollary~2.4]{CuSk86}.


In the following we will provide an unbounded 
representative for a class that makes diagrams in $K$-theory commute, by lifting 
the unbounded representative of the extension class to the mapping cone, 
as we describe below. The axioms of triangulated categories do not 
guarantee the uniqueness of such a class, hence we leave it as an 
open problem to verify that our unbouded Kasparov module is a 
representative for the class in \eqref{eq:class}
\end{rmk}

The map $\partial$ is implemented by the Kasparov product with
the class of the defining extension.
Now we are working under Assumptions \ref{ass:one} and \ref{ass:two}, 
and so we have
an explicit unbounded representative $(\O_E,\phimod,\D)$ for the defining extension. 
As noted earlier, $\D$ has discrete spectrum and commutes 
with the left action of $A$, hence we have $\iota_{A,O_E}^*[(\O_E,\phimod,\D)]=0$. 
In particular there is a class $[\hat\D]\in KK(M(A,O_E),A)$ such that 
$j^{B*}[\hat\D]=[(\O_E,\phimod,\D)]$. As the notation suggests, there is an explicit
unbounded representative for the class $[\hat\D]$, provided by
the main result of \cite{CPR1}. 
Subject to some further hypotheses, the class $[\hat\D]$ 
can be used to help compute index pairings, \cite[Theorem 5.1]{CPR1}, because of the 
explicit unbounded representative.
The even unbounded Kasparov module representing the class $[\hat\D]$ is denoted
\begin{equation}
\big(M(A,O_E),\,\widehat{\Xi}_A=X\oplus X^\sim,\, \hat\D\big).
\label{eq:hat-dee}
\end{equation}
The module $X$ is a completion of $L^2([0,\infty))\ox\phimod$
while $X^\sim$ also contains functions with a limit at infinity. The operator is given by
$$
\hat\D=\begin{pmatrix} 0 & -\partial_t+\D\\ \partial_t+\D & 0  \end{pmatrix},
$$
together with suitable APS-type boundary conditions, \cite[Section 4.1]{CPR1}. 
The details will not influence the
following discussion, but we stress that the operator is concrete, and so index pairings are explicitly computable.

Trying $\cdot\hox_{M}\hat{\D}$ in place of $?$ we find
that the squares to the left of each instance of $\hat{\D}$ in the diagram
\begin{align}\xymatrix{\cdots \ar[r]^{\iota_{*}}&K_0(\Pim) \ar[r]^-{j_*^B}\ar[d]^{=} &K_{1}(M) \ar[r]^-{\ev_*} \ar[d]^{\cdot\ox[\hat\D]}&
K_{1}(A) \ar[r]^{\iota_{*}}\ar[d]^{=}& K_1(\Pim) \ar[r]^-{j_*^B}\ar[d]^{=} &K_{0}(M) \ar[r]^-{\ev_*} \ar[d]^{\cdot\ox[\hat\D]}&
K_{0}(A) \ar[r]^{\iota_{*}}\ar[d]^{=}&\cdots \\
\cdots \ar[r]^{\iota_{*}}&K_0(\Pim) \ar[r]^-{\partial} &K_{1}(A) \ar[r]^-{1-[E]} &
K_{1}(A) \ar[r]^{\iota_{*}}&K_1(\Pim) \ar[r]^-{\partial} &K_{0}(A) \ar[r]^-{1-[E]} &
K_{0}(A) \ar[r]^{\iota_{*}}&\cdots }\label{eq:diagram}
\end{align}
commute. Now what about the squares to the right? 

\section{The $K$-theory of the mapping cone of a Cuntz-Pimsner algebra}

We use the characterisation of the $K$-theory group $K_0(M)$ due to \cite{Putnam}. Classes
in $K_0(M)$ can be realised as (stable homotopy classes of) partial isometries $v\in M_k(O_E)$
with range and source projections $vv^*,\,v^*v\in M_k(A)$. In the usual projection picture, the
class of the partial isometry $v$ corresponds to the class \cite[Section 5]{CPR1}
$$
[e_v]-\left[\begin{pmatrix} 1 & 0\\ 0 & 0\end{pmatrix}\right],\qquad 
e_v(t)=\begin{pmatrix} 1-\frac{1}{1+t^2}vv^* & \frac{-it}{1+t^2}v\\ \frac{it}{1+t^2}v^* & \frac{1}{1+t^2}v^*v\end{pmatrix}.
$$
It is important to note for later use that this characterisation of 
$K_0(M)$ does not depend on having unital algebras: 
for $A\subset B$ not necessarily unital, we need
only consider partial isometries $v\in M_k(\tilde{B})$ over the unitisation $\tilde{B}$
with $vv^*,\,v^*v\in M_k(\tilde{A})$ satisfying $vv^*-v^*v\in M_k(A)$
or even more generally, $[vv^*]-[v^*v]\in K_0(A)$. In the following discussion
one can just replace $v$ over $O_E$ with $v$ over $\tilde{O_E}$ 
satisfying $[v^*v]-[vv^*]\in K_0(A)$, and even take $v\in\widetilde{\mathcal{K}\ox O_E}$.

Returning to the exact sequence, we again let $v$ be a 
partial isometry over $\Pim$,  say $v\in M_k(O_E)$, with $v^*v$ and $vv^*$ projections 
over $\iota_{A,O_E}(A)$. Then we have
$\ev_*([v])=[v^*v]-[vv^*]$. In the other direction, we need to evaluate the 
product $[v]\ox_{\Pim}[\hat{\D}]\ox_A([{\rm Id}_{KK(A,A)}]-[E])$.

Our strategy is to use \cite[Theorem~5.1]{CPR1}, to find that the latter product is given by
\begin{equation}
-{\rm Index}(Q_kvQ_k:v^*vF_E^k\to vv^*F_E^k)\ox_A([{\rm Id}_{KK(A,A)}]-[E]),
\label{eq:index-cpr1}
\end{equation}
where $Q_k=Q\otimes 1_k$, and $Q\phimod=F_E$, the Fock module. Here $[E]$ is short hand for
the class in $KK(A,A)$ of $(A,E_A,0)$, and similarly $[{\rm Id}_{KK(A,A)}]$ can be represented by
$(A,A_A,0)$.

In order to be able to use this formula, we need to check the hypotheses 
of \cite[Theorem~5.1]{CPR1}, and then actually compute
the product in Equation \eqref{eq:index-cpr1}. The precise statement
of \cite[Theorem~5.1]{CPR1} in our case is

\begin{thm}
Let $(\O_E,\phimod,\D)$ be the unbounded Kasparov module
for the (pre-) $C^*$-algebras $\O_E\subset O_E$ and $A$ 
representing the extension class.
Let $(M,\widehat{\Xi}_A,\hat\D)$ be the unbounded
Kasparov $M(A,O_E)$-$A$ module of Equation \eqref{eq:hat-dee}.
Then for any unitary $u\in M_k(A)$ such that 
$Q_k$ and the projection $(\ker\D)\ox{\rm Id}_k$ both commute with
$u(\D\ox {\rm Id}_k)u^*=:u\D_k u^*$ and $u^*\D_k u$
we have the following equality of
index pairings with values in $K_0(A)$:
\begin{align*} 
\langle [u],[(\O_E,\phimod,\D)]\rangle&:={\rm Index}(Q_ku^*Q_k)
=
{\rm Index}(e_u(\hat\D_k\otimes1_2) e_u)-{\rm Index}(\hat\D_k)\\
&=:
\left\langle [e_u]-\left[\begin{pmatrix} 1 & 0\\ 0 & 0\end{pmatrix}\right],[(M,\hat{\Xi}_A,\hat\D)]\right\rangle
\in K_0(A).
\end{align*}
Moreover, if $v$ is a  partial isometry, $v\in M_k(\O_E)$, with
$vv^*,v^*v\in M_k(A)$ and such that 
$Q_k$ and $ (\ker\D)_k$ {both  commute  with} 
$v\D_k v^*$ and $ v^*\D_k v$
we have
\begin{align*}
\left\langle [e_v]-\left[\begin{pmatrix} 1 & 0\\ 0 & 0\end{pmatrix}\right],[(M,\hat{\Xi}_A,\hat{\D})]\right\rangle
&=
-{\rm Index}\big(Q_kvQ_k:v^*vF_E^k\to vv^*F_E^k\big)\in K_0(A).
\end{align*}
\end{thm}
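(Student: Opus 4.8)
The plan is to recognise the displayed identities as the specialisation of \cite[Theorem~5.1]{CPR1} to the odd unbounded Kasparov module $(\O_E,\phimod,\D)$ of Theorem~\ref{thm:kappa-ext} and its mapping cone lift $(M,\widehat{\Xi}_A,\hat\D)$ of Equation~\eqref{eq:hat-dee}; the work is then to check that the hypotheses of that theorem remain valid in the (possibly non-unital) setting and to follow its proof. First I would collect the structural input from Theorem~\ref{thm:kappa-ext}: the operator $\D$ has spectrum contained in $\Z$, each eigenspace is a bi-Hilbertian $A$-bimodule of finite right Watatani index with spectral projection $P_{n,r}$, and the non-negative spectral projection $Q=\sum_{n=0}^{\infty}P_{n,n}$ satisfies $Q\phimod=F_E$. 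The only departure from the unital treatment in \cite{kappa,CPR1} is that the resolvent of $\D$ is merely locally compact; but, exactly as in the proof of Lemma~\ref{lem:toeplitz-compact}, every compression $P_{m,s}SP_{n,r}$ of $S\in O_E$ is a compact endomorphism, so $S(1+\D^2)^{-1/2}$ is a norm-convergent sum of compacts and the relevant compressions of $\D$ and $\hat\D$ are $A$-Fredholm. This guarantees that all the symbols ${\rm Index}(\cdot)$ in the statement denote genuine elements of $K_0(A)$.

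Next I would set up the even pairing on the mapping cone. With
$$
\hat\D=\begin{pmatrix} 0 & -\partial_t+\D\\ \partial_t+\D & 0\end{pmatrix}
$$
acting on $\widehat{\Xi}_A=X\oplus X^\sim$ subject to the APS-type boundary conditions of \cite[Section~4.1]{CPR1}, and using the projection picture in which a class in $K_0(M)$ is represented by $e_v-\diag(1,0)$ for $v$ a partial isometry over $\widetilde{O_E}$ (or over $\widetilde{\mathcal{K}\ox O_E}$) with $[v^*v]-[vv^*]\in K_0(A)$, the pairing of such a class with $[(M,\widehat{\Xi}_A,\hat\D)]\in KK(M,A)$ is by definition the relative index ${\rm Index}\big(e_v(\hat\D_k\ox 1_2)e_v\big)-{\rm Index}(\hat\D_k)\in K_0(A)$, which is well-defined precisely because of the $A$-Fredholm statements of the previous paragraph. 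The heart of the argument is the explicit evaluation of this relative index following \cite[Sections~4 and~5]{CPR1}: after separating the half-line variable $t$, the boundary value problem for $\partial_t+\D$ on $[0,\infty)$ with APS conditions has kernel and cokernel governed by the non-negative spectral projection $Q$; conjugating by $e_v$ and contracting the $t$-direction through a homotopy of $A$-Fredholm operators reduces the relative index to a sign times the $K_0(A)$-valued index of the compression $Q_kvQ_k:v^*vF_E^k\to vv^*F_E^k$, which is the second displayed equality. Specialising to $v=u^*$ for a unitary $u\in M_k(A)$, so that $v^*v=vv^*=1_k$ and the assumed commutation of $Q_k$ and $(\ker\D)\ox{\rm Id}_k$ with $u\D_ku^*$ and $u^*\D_ku$ is exactly what the argument requires, then yields the first chain of equalities with the odd index pairing $\langle[u],[(\O_E,\phimod,\D)]\rangle:={\rm Index}(Q_ku^*Q_k)$.

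The step I expect to be the main obstacle is the non-unital bookkeeping underlying the previous paragraph: one must check carefully that every operator whose index is taken is genuinely $A$-Fredholm and, above all, that the homotopy used to contract the half-line direction stays inside the $A$-Fredholm operators, so that local compactness of the resolvent of $\D$ (via compactness of the compressions $P_{m,s}SP_{n,r}$) really does play the role of the compactness exploited in \cite{CPR1}. Once this is secured, the two identities are \cite[Theorem~5.1]{CPR1} transcribed essentially verbatim, and the sign reconciliation between the unitary and partial isometry cases is as there.
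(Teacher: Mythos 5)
Your proposal matches the paper's treatment: the theorem is stated there precisely as the specialisation of \cite[Theorem~5.1]{CPR1} to the module $(\O_E,\phimod,\D)$ of Theorem~\ref{thm:kappa-ext} and its mapping-cone lift \eqref{eq:hat-dee}, with no further proof given beyond the observation (which you also make) that in the non-unital case the resolvent of $\D$ is only locally compact, handled exactly as in Lemma~\ref{lem:toeplitz-compact} via compactness of the compressions $P_{m,s}SP_{n,r}$. Your additional sketch of the internals of the APS-type index computation from \cite{CPR1} is consistent with, and goes somewhat beyond, what the paper records.
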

It is important to observe that when we consider $v\D_kv^*$, we are suppressing the representation
of $v$ on $\phimod^k$, but it makes a difference in what follows. For this reason
we temporarily introduce the notation $\varphi:\Pim \to \End^*_A(\phimod)$ 
for the representation. This representation naturally extends to a representation
$\varphi_k : M_k(\Pim) \to \End^*_A(\oplus_{i=1}^k \phimod)$, 
and to $\mathcal{K}\ox \Pim\to \End^*_A(\H\ox\phimod)$ in the countably generated case. 

The next result, that we state here in the particular case of Cuntz-Pimsner 
algebras of bimodules satisfying Assumptions \ref{ass:one} and \ref{ass:two}, 
holds in general for any representation of an algebra on a bimodule, 
for which there exists a decomposition of the type in Equation \eqref{eq:en-ar}.
\begin{lemma}
\label{lem:chop-vee}
Given $v\in M_k(O_E)$ or $\mathcal{K}\ox O_E$, define
$$
v_{m,s}:=\sum_{n\in\Z,r\geq\max\{0,n\}}P_{n+m,r+s}\varphi_k(v)P_{n,r}.
$$
Then $\varphi_k(v)=\sum_{m,s}v_{m,s}$ where the sum converges strictly.
If $v$ is a partial isometry with range and source projections in $A$,
the $v_{n,r}$ are partial isometries with 
$v_{n,r}^*v_{m,s}=\delta_{n,m}\delta_{r,s}v_{n,r}^*v_{n,r}$ and
$v_{n,r}v_{m,s}^*=\delta_{n,m}\delta_{r,s}v_{n,r}v_{n,r}^*$. Hence the projections $v_{n,r}^*v_{n,r}$
are pairwise orthogonal, and likewise the projections $v_{n,r}v_{n,r}^*$ are pairwise orthogonal.
\end{lemma}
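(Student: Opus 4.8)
The plan is to establish the three claims in order: strict convergence of $\sum_{m,s}v_{m,s}$ to $\varphi_k(v)$; the partial isometry relations for each $v_{m,s}$ when $v$ is a partial isometry over $A$; and pairwise orthogonality of the range and source projections, which is an immediate consequence of the second claim. Throughout I would work with a single copy of $\phimod$ (the passage to $M_k$ or $\mathcal K\ox\Pim$ is cosmetic, replacing $\phimod$ by $\oplus_{i=1}^k\phimod$ or $\H\ox\phimod$ and $P_{n,r}$ by $P_{n,r}\ox 1$).

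First I would prove strict convergence. Since $\{P_{n,r}\}$ are the mutually orthogonal projections from the decomposition \eqref{eq:en-ar} of $\phimod$, the partial sums $\sum_{|n|\le L,\ r\le L}P_{n,r}$ converge strictly to the identity. Writing $\varphi_k(v)=\big(\sum P_{n',r'}\big)\varphi_k(v)\big(\sum P_{n,r}\big)$ and regrouping by the shift $(m,s)=(n'-n,r'-r)$ gives the formula $\varphi_k(v)=\sum_{m,s}v_{m,s}$ formally; to get genuine strict convergence I would note that for fixed $\xi\in\phimod$ only finitely many $P_{n,r}\xi$ are non-zero up to arbitrarily small norm, so the double sum applied to $\xi$ (and symmetrically, composed on the other side) converges in norm, which is exactly strict convergence. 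This step uses nothing beyond the structure of the decomposition already recorded in the excerpt.

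The substantive step is the partial isometry relations. Here I would use the key structural fact about the decomposition $\phimod=\bigoplus \Xi_{n,r}$: the integer $n$ records the \emph{degree} under the circle action (the difference in the number of creation and annihilation operators), while $r$ records an additional grading coming from the frame construction of \cite[Lemmas 2.8, 2.9]{kappa}. If $v\in O_E$ is a partial isometry with $vv^*,v^*v\in A$, then $v$ has a well-defined degree — say $v$ is homogeneous of degree $d$, so $\varphi_k(v)P_{n,r}\subset$ (closure of) $\bigoplus_{r'}\Xi_{n+d,r'}$ — and more is true: because the source and range projections lie in $A\subset\core$, which is degree-zero and preserves each $\Xi_{0,r}$, one can check that $v$ shifts the $r$-grading by a fixed amount as well on each homogeneous piece. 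The cleanest route is: compute $v_{n,r}^*v_{m,s}=\sum P_{n,r}\varphi_k(v)^*P_{n+?,?}\cdots$, insert $\varphi_k(v)^*\varphi_k(v)=\varphi_k(v^*v)$, and use that $v^*v\in A$ commutes with every $P_{n,r}$ (since $A$ acts diagonally with respect to the decomposition — this is part of the content of \cite[Theorem 2.10]{kappa} / Theorem \ref{thm:kappa-ext}, that $\D$ and hence all $P_{n,r}$ commute with the left $A$-action, and the same for the $r$-grading since the $\Xi_{n,r}$ are $A$-bimodules). This collapses the double sum to the diagonal $\delta_{n,m}\delta_{r,s}$ terms, giving $v_{n,r}^*v_{m,s}=\delta_{n,m}\delta_{r,s}\,P_{n,r}\varphi_k(v^*v)P_{n,r}$, and one identifies $P_{n,r}\varphi_k(v^*v)P_{n,r}=v_{n,r}^*v_{n,r}$ by the same commutation. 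The computation for $v_{n,r}v_{m,s}^*$ is symmetric, using $\varphi_k(v)\varphi_k(v)^*=\varphi_k(vv^*)$ with $vv^*\in A$.

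The third claim is then immediate: $v_{n,r}^*v_{n,r}$ and $v_{m,s}^*v_{m,s}$ are projections (being of the form $w^*w$ for the partial isometry $w=v_{n,r}$), and $v_{n,r}^*v_{n,r}\cdot v_{m,s}^*v_{m,s}=v_{n,r}^*(v_{n,r}v_{m,s}^*)v_{m,s}=0$ for $(n,r)\neq(m,s)$ by the relation just proved; same for the ranges. The main obstacle I anticipate is the bookkeeping in the second step — specifically, making rigorous that a partial isometry $v$ over $A$ really does shift \emph{both} gradings $n$ and $r$ by fixed amounts on each homogeneous component, rather than merely respecting the decomposition in a weaker sense; this hinges on spelling out how the $\Xi_{n,r}$ sit inside $\phimod$ relative to the $O_E$-action and on the $A$-bimodule structure of each $\Xi_{n,r}$, both of which are available from \cite{kappa} and Theorem \ref{thm:kappa-ext}. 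One should also be slightly careful that when $v$ is not homogeneous one decomposes it first into degree-homogeneous pieces (or simply notes that the relations are what force $v_{m,s}$ to vanish except for $(m,s)$ in the finite set of shifts actually occurring), but since the statement is phrased for general $v\in M_k(O_E)$ the argument via $\varphi_k(v^*v),\varphi_k(vv^*)\in M_k(A)$ commuting with all $P_{n,r}$ handles this uniformly without a preliminary reduction.
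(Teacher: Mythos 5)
Your guiding idea for the key step --- exploit that $vv^*,v^*v\in M_k(A)$ commute with every $P_{n,r}$ because $A$ acts diagonally with respect to the decomposition \eqref{eq:en-ar} --- is exactly the paper's, and your treatment of the strict convergence and of the final orthogonality claim is also essentially the same. However, the computation you propose as the ``cleanest route'' does not work as stated. Writing out the product,
$$
v_{n,r}^*v_{m,s}=\sum_{l,t}\sum_{l',t'}P_{l,t}\,\varphi_k(v)^*\,P_{l+n,t+r}\,P_{l'+m,t'+s}\,\varphi_k(v)\,P_{l',t'},
$$
the surviving terms are those with $(l'+m,t'+s)=(l+n,t+r)$, so the intermediate projection is pinned to a single summand of the decomposition. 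You are therefore not free to ``insert $\varphi_k(v)^*\varphi_k(v)=\varphi_k(v^*v)$'': that identity only becomes available after summing the intermediate projection over all of $\bigoplus\Xi_{a,b}$ to recover the identity operator, which is precisely what the constraint forbids. The claimed output $v_{n,r}^*v_{m,s}=\delta_{n,m}\delta_{r,s}P_{n,r}\varphi_k(v^*v)P_{n,r}$ is also wrong on its face: even for $(n,r)=(m,s)$ the correct expression is $\sum_{l,t}P_{l,t}\varphi_k(v)^*P_{l+n,t+r}\varphi_k(v)P_{l,t}$, a sum over all $(l,t)$, not a compression by the single projection $P_{n,r}$.

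What the paper actually does is work with the full expansion $vv^*=\sum_{m,s,n,r}v_{m,s}v_{n,r}^*$: each $v_{m,s}v_{n,r}^*$ shifts the bigrading by $(m-n,s-r)$, while $vv^*\in M_k(A)$ preserves each $\Xi_{l,t}$, so the nonzero-shift contributions must vanish, yielding $v_{m,s}v_{n,r}^*=\delta_{m,n}\delta_{s,r}v_{m,s}v_{m,s}^*$ (and similarly for $v_{m,s}^*v_{n,r}$). It then uses the projection identity $(vv^*)^2=vv^*$ together with these relations to get $\sum(v_{m,s}v_{m,s}^*)^2=\sum v_{m,s}v_{m,s}^*$ with mutually orthogonal summands, whence each $v_{m,s}v_{m,s}^*$ is a projection and each $v_{m,s}$ is a partial isometry. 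This last step cannot be skipped: your third paragraph asserts $v_{n,r}^*v_{n,r}$ is a projection ``being of the form $w^*w$ for the partial isometry $w=v_{n,r}$'', but that $v_{n,r}$ is a partial isometry is part of what is being proved, and it does not follow from your formula without an argument of this kind. Your side remarks about $v$ shifting both gradings by fixed amounts are not needed here (that is the content of Lemma \ref{lem:finite-sum}, which is proved later \emph{from} this lemma), and your closing observation that the route via $vv^*,v^*v\in M_k(A)$ handles inhomogeneous $v$ uniformly is correct --- but it must be run at the level of the full double sum, as in the paper, not term by term.
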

\begin{proof}
The first statement follows from the definition of $v_{m,s}$, the orthogonal decomposition 
$\phimod=\oplus\Xi_{n,r}$ together with the fact that $\sum_{n,r} P_{n.r}$ converges to $\Id_{\phimod}$ strictly.

Now suppose that we have $v\in O_E$ a partial isometry with range and source projections
in $A$ (the following argument adapts to partial isometries in matrix algebras).
Then we have
$$
\varphi(v)=\sum v_{m,s}.
$$
Since $vv^*$ and $v^*v$ are matrices over $A$, we see, in particular, 
that they commute with $\D$. Hence
$$
vv^*=\sum v_{m,s}v_{n,r}^*\in M_N(A) \Rightarrow v_{m,s}v_{n,r}^*=\delta_{m,n}\delta_{s,r}v_{m,s}v_{m,s}^*.
$$
Similarly $v_{m,s}^*v_{n,r}=\delta_{m,n}\delta_{s,r}v_{m,s}^*v_{n,r}$. 
Now we recall that $vv^*$ is a projection and consider
$$
vv^*=\sum v_{m,s}v_{m,s}^*=(vv^*)^2=(\sum v_{m,s}v_{m,s}^*)^2
=\sum v_{m,s}v_{m,s}^*v_{n,r}v_{n,r}^*=\sum (v_{m,s}v_{m,s}^*)^2
$$
the last equality following from $v_{m,s}^*v_{n,r}=\delta_{m,n}\delta_{s,r}v_{m,s}v_{m,s}^*$. 
Since $v_{m,s}v_{m,s}^*v_{n,r}v_{n,r}^*=0$ for
$(m,s)\neq (n,r)$, we see that each $v_{m,s}v_{m,s}^*$ 
is a projection over $A$, and the various $v_{m,s}v_{m,s}^*$ 
are mutually orthogonal. Similarly, the $v_{m,s}^*v_{m,s}$ form a set of mutually orthogonal
projections. 
\end{proof}
We deduce the commutation relation 
$v_{m,s}P_{l,t}=P_{l+m,t+s}v_{m,s}$ for all $l,m\in \Z$, $t\geq \max\{0,l\}$, $s\geq \max\{0,m\}$. This 
seems surprising given the more complicated commutation relation of \cite[Lemma 2.15]{kappa}, but
they are reconciled by the following observation (proved in the Lemma below). 
If $\mu\in F_E$ is homogenous of degree $|\mu|$
then for $n$ sufficiently large and positive
$$
P_{n+|\mu|,n+|\mu|}S_\mu P_{n,n}\neq 0.
$$
Hence $S_\mu=\sum_{j=0}^{|\mu|}(S_\mu)_{|\mu|,j}$ has $S_{\mu,|\mu|}\neq 0$, 
and we see that the decomposition in the Lemma 
uses much more information than just the degree given by the gauge action.

\begin{lemma}
\label{lem:homog}
Suppose that $S\in O_E$ satisfies $S_{n,r}\neq 0$ for some 
$n\in \Z$ and $r\geq \max\{0,n\}$. Then
$S_{n,n}\neq 0$.
\end{lemma}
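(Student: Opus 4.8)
The plan is to prove this by a careful analysis of the summands $\Xi_{n,r}$ appearing in the decomposition \eqref{eq:en-ar} and the action of the generators $S_\mu$ upon them. Recall that $S \in O_E$ decomposes strictly as $S = \sum_{m,s} S_{m,s}$ where $S_{m,s} = \sum_{n,r} P_{n+m,r+s} \varphi(S) P_{n,r}$, and that for a homogeneous $\mu \in F_E$ we have $S_\mu = \sum_{j=0}^{|\mu|} (S_\mu)_{|\mu|,j}$ since $S_\mu$ raises the gauge degree by exactly $|\mu|$. The key point, observed just before the statement, is that the pieces $(S_\mu)_{|\mu|, j}$ are governed by finer data than the gauge degree --- the second index $j$ records how the ``decay-twisted'' components ${}_AE^{\ox j}$ factors in the $\Xi_{n,r}$ interact with left multiplication by $\mu$. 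I would first reduce the general claim to the case $S = S_\mu S_\nu^*$ with $\mu,\nu$ homogeneous (these span a dense subalgebra of $O_E$), using that $S \mapsto S_{n,n}$ is norm-continuous because each $P_{n,n}$ is a contraction and the sum $S = \sum S_{m,s}$ is a norm-convergent strict sum when $S \in O_E$ (as in the discussion following Theorem \ref{thm:kappa-ext}, using that compressions $P_{m,s}SP_{n,r}$ are compact).

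Next I would make precise the claim that $P_{n+|\mu|,n+|\mu|} S_\mu P_{n,n} \neq 0$ for $n$ large. The submodule $\Xi_{n,r} \subset \phimod$ is (up to the invertible twists $c_s$) the range of a frame of the form $\{W_{e_\rho, c_s^{-1/2} f_\sigma}\}$ with $|\rho| = r$, $|\sigma| = s$ and $n = r - s$; intuitively $\Xi_{n,r}$ is ``$E^{\ox r}$ tensored with $\overline{E}^{\ox s}$'' sitting inside $\phimod = \bigoplus_n E^{\ox n}$-type completion. Multiplying by $S_\mu$ on the left prepends $\mu$, landing in $\Xi_{n+|\mu|, r+|\mu|}$, and the diagonal piece $(S_\mu)_{|\mu|,|\mu|}$ is the component that sends $\Xi_{n,n} = \Xi_{n,n}$ (the $r=s$, i.e. $s = 0$... no: $\Xi_{n,n}$ has $r = n$ so $s = r - n = 0$) --- more carefully, $\Xi_{n,n}$ corresponds to $s = 0$, the ``pure $E^{\ox n}$'' summand, which is exactly $QF_E$-type data. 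So I expect $S_\mu P_{n,n}$ to have nonzero component in $\Xi_{n+|\mu|, n+|\mu|}$ precisely because prepending $\mu$ to an element of $E^{\ox n}$ gives an element of $E^{\ox(n+|\mu|)}$, up to the bounded invertible rescalings by the $c_k$'s, which do not kill anything. This shows $(S_\mu)_{|\mu|,|\mu|} \neq 0$, hence $(S_\mu)_{|\mu|, j} = 0$ forces $j = |\mu|$ is impossible to avoid; i.e., $S_\mu$ always has nonzero $(\,\cdot\,)_{n,n}$-type diagonal.

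For the general element $S = S_\mu S_\nu^*$: its gauge degree is $|\mu| - |\nu|$, and $(S_\mu S_\nu^*)_{m,s}$ is a product of pieces of $S_\mu$ and $S_\nu^*$ with total gauge-raising $|\mu|-|\nu| = m$. The contribution to $S_{n,n}$ (where $n = |\mu| - |\nu|$) comes from composing the diagonal-type pieces of $S_\mu$ and $S_\nu^*$, which by the previous paragraph are nonzero; orthogonality of the $\Xi$-summands (Lemma \ref{lem:chop-vee}) guarantees no cancellation between this and other cross-terms when we compress to a fixed $\Xi_{l,t}$ with $t = l = $ appropriate value. Thus $S_{n,n} \neq 0$ whenever $S \neq 0$ has some $S_{n,r} \neq 0$, since $S_{n,r} \neq 0$ already certifies $S \neq 0$ and the $\mu,\nu$ realizing it have $|\mu| - |\nu| = n$. \textbf{The main obstacle} I anticipate is making the non-vanishing statement $P_{n+|\mu|,n+|\mu|} S_\mu P_{n,n} \neq 0$ genuinely rigorous: one must control the twists $c_k$ and $R_k$ from Assumption \ref{ass:two} and verify that the frame vectors $W_{e_\rho, c_s^{-1/2} f_\sigma}$ span $\Xi_{n,r}$ densely in a way compatible with left multiplication, i.e. that prepending $\mu$ does not accidentally land entirely in lower summands due to the $R_k$ projections. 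I would handle this by pairing against explicit frame vectors and using injectivity of the left action (which makes $\ei^\beta$ and all the $c_k$ invertible) to conclude the diagonal component is nonzero.
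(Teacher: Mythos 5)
Your proposal follows essentially the same route as the paper's proof: approximate $S$ by finite sums of monomials $S_\al S_\beta^*$ of gauge degree $n$, and detect the diagonal component of such a monomial by letting it act on the Fock-type summands $P_{m,m}\phimod$ for $m$ large. The one step that does not work as you state it is the passage from ``the diagonal pieces of $S_\mu$ and of $S_\nu^*$ are each nonzero'' to ``their composite is nonzero'': orthogonality of the $\Xi_{l,t}$ only rules out cancellation between distinct summands, not the vanishing of the product itself. Indeed $(S_\mu)_{|\mu|,|\mu|}$ acts on the Fock-like part essentially as the creation operator $T_\mu$, and composing with $T_\nu^*$ produces left multiplications by inner products $(\nu|\cdot)_A$ that need not act injectively, so nonvanishing of the factors does not give nonvanishing of the composite. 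The paper sidesteps this by testing the whole monomial at once: for $m>|\beta|$ one has $S_\al S_\beta^*P_{m,m}\phimod\subset P_{m+n,m+n}S_\al S_\beta^*\phimod$, and evaluating on the explicit vectors $[S_\beta S_\gamma]\in P_{m,m}\phimod$ (for which $S_\beta^*[S_\beta S_\gamma]=[S_{(\beta|\beta)_A\gamma}]$) shows $S_\al S_\beta^*P_{m,m}\neq 0$, hence the diagonal component $(S_\al S_\beta^*)_{n,n}$ is nonzero. This is precisely the ``pair against explicit vectors'' fix you anticipate in your closing paragraph; with that substitution in place of the composition argument, your proof coincides with the paper's.
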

\begin{proof}
We approximate $S$ by a finite sum of monomials $S_\al S_\beta^*$. Then $S_{n,r}$
is approximated by monomials $S_\al S_\beta^*$ with $|\alpha|=r$ and $|\alpha|-|\beta|=n$.

For such monomials, and $m>|\beta|$, we have 
$S_\al S_\beta^*P_{m,m}\phimod\subset P_{m+|\al|-|\beta|, m+|\al|-|\beta|}S_\al S_\beta^*\phimod$,
and by considering $[S_\beta S_\gamma]\in\phimod$ we 
see that $S_\al S_\beta^*P_{m,m}\neq 0$. Hence 
$P_{m+|\al|-|\beta|, m+|\al|-|\beta|}S_\al S_\beta^*P_{m,m}\neq 0$ for $m>|\beta|$.
Hence $(S_\al S_\beta^*)_{|\al|-|\beta|,|\al|-|\beta|}=(S_\al S_\beta^*)_{n,n}\neq 0$
and so also $S_{n,n}\neq 0$.
\end{proof}

\begin{lemma}
\label{lem:finite-sum}
Let $v\in O_E$ (or $\tilde{O_E}$) be a partial isometry 
with range and source projections in $\tilde{A}$. Then 
$\varphi(v)$ is a finite sum of `homogenous' components $v_{m,s}$.
\end{lemma}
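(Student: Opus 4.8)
The plan is to show that only finitely many of the components $v_{m,s}$ are non-zero; since $\varphi(v)=\sum_{m,s}v_{m,s}$ converges strictly by Lemma~\ref{lem:chop-vee}, this is exactly the assertion. The clinching observation is that each $v_{m,s}$ is a partial isometry, by Lemma~\ref{lem:chop-vee}, so that $\Vert v_{m,s}\Vert\in\{0,1\}$; here one checks that the proof of Lemma~\ref{lem:chop-vee} goes through unchanged when the range and source projections of $v$ lie in $\tilde A$ rather than $A$, because the unit of $\tilde A$ acts on $\phimod$ as $\Id_{\phimod}$, which commutes with $\D$ and with each $P_{n,r}$. The two technical facts I would isolate are: (i) the component maps $T\mapsto T_{m,s}:=\sum_{n,r}P_{n+m,r+s}\,T\,P_{n,r}$ (strict sum) are norm contractions on $\End^*_A(\phimod)$, and are graded-multiplicative in the sense that $(ST)_{m,s}=\sum_{m',s'}S_{m',s'}T_{m-m',s-s'}$, which is immediate from $\sum_{n,r}P_{n,r}=\Id_{\phimod}$; and (ii) a finite linear combination of monomials $S_\alpha S_\beta^*$ has only finitely many non-zero components.

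For (i), the key is that $u_{z,w}:=\sum_{n,r}z^nw^rP_{n,r}$ is a unitary in $\End^*_A(\phimod)$ for each $(z,w)\in U(1)^2$, so $\mathrm{Ad}(u_{z,w})$ is a $*$-automorphism and $T_{m,s}$ is its $(m,s)$-th Fourier coefficient, $T_{m,s}=\int_{U(1)^2}\bar z^m\bar w^s\,u_{z,w}\,T\,u_{z,w}^*\,\d z\,\d w$, whence $\Vert T_{m,s}\Vert\le\Vert T\Vert$. The point needing care is that $\mathrm{Ad}(u_{z,w})$ is only strictly, not norm, continuous on $\End^*_A(\phimod)$; I would therefore first run the integral and the estimate on the finite blocks $\sum_{|n|\le N,\,0\le r\le N}P_{n,r}$, on which the double gauge action is norm continuous, and then pass to the strict limit $N\to\infty$ using lower semicontinuity of the operator norm. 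I expect this to be essentially the only real obstacle; everything else is bookkeeping once one notices that being a partial isometry pins $\Vert v_{m,s}\Vert$ to $\{0,1\}$.

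For (ii), I would use that for a creation operator $S_\mu$ with $\mu\in E^{\ox p}$, the discussion preceding Lemma~\ref{lem:homog} (resting on the commutation relations of \cite[Lemma~2.15]{kappa}) gives $(S_\mu)_{m,s}\neq 0$ only for $m=p$ and $0\le s\le p$; taking adjoints, $(S_\mu^*)_{m,s}\neq 0$ only for $m=-p$ and $-p\le s\le 0$. By the graded multiplicativity in (i), $(S_\alpha S_\beta^*)_{m,s}\neq 0$ forces $m=|\alpha|-|\beta|$ (the single gauge degree) and $-|\beta|\le s\le|\alpha|$, a finite range, so a finite sum of such monomials has a finite set $F$ of non-zero components.

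Finally, putting it together: since $O_E$ (and $\tilde{O_E}$) is the closed linear span of the monomials $S_\alpha S_\beta^*$ (together with a scalar, in the unitised case), pick $w$ a finite linear combination of these with $\Vert v-w\Vert<1$, and let $F$ be as in (ii) for $w$. For $(m,s)\notin F$ one has $w_{m,s}=0$, so by (i) and contractivity of $\varphi$, $\Vert v_{m,s}\Vert=\Vert(\varphi(v)-\varphi(w))_{m,s}\Vert\le\Vert v-w\Vert<1$, and hence $v_{m,s}=0$. Therefore $\varphi(v)=\sum_{(m,s)\in F}v_{m,s}$ is a finite sum; the same argument runs verbatim with $v\in M_k(O_E)$ or $v\in\mathcal K\ox O_E$, after amplifying $\phimod$, $\D$ and $\varphi$.
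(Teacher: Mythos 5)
Your proof is correct, but it takes a genuinely different route from the one in the paper. The paper reduces to the unitary case by the standard $2\times 2$ doubling trick and then studies the path $w_t=\varphi(v^*)\ei^{it\D}\varphi(v)\ei^{-it\D}$: Lemma \ref{lem:chop-vee} forces $w_t$ to commute with $\D$, a short computation gives the cocycle identity $w_{t+s}=w_tw_s$, so $w_t=\ei^{ita}$ for some self-adjoint $a$, and integrality of the spectrum of $\D$ yields $\ei^{2\pi ia}=1$, hence ${\rm sp}(a)$ is a finite subset of $\Z$ and only finitely many $v_{m,s}$ survive. You instead combine three facts: each $v_{m,s}$ is a partial isometry (so $\Vert v_{m,s}\Vert\in\{0,1\}$), the component maps $T\mapsto T_{m,s}$ are contractions, and a finite sum of monomials $S_\alpha S_\beta^*$ has finitely many non-zero components; an approximant within distance $1$ then annihilates all but finitely many components of $v$. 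Both arguments use Lemma \ref{lem:chop-vee} at the same essential point (and your extension of it to range and source projections in $\tilde A$ is right, since $\tilde A$ still commutes with $\D$). What your version buys is that it handles partial isometries directly, gives an explicit finite index set $F$ controlled by any monomial approximant, and sidesteps the norm continuity of $t\mapsto w_t$, which the paper asserts rather than proves. What it costs is the bookkeeping in your step (i); note that the $\mathrm{Ad}(u_{z,w})$/Fourier-coefficient detour and the truncation argument are not actually needed, since contractivity of $T\mapsto T_{m,s}$ follows directly from the mutual orthogonality of the ranges of the $P_{n+m,r+s}$ and the injectivity of $(n,r)\mapsto(n+m,r+s)$: for $\xi\in\phimod$ one has $(T_{m,s}\xi\,|\,T_{m,s}\xi)=\sum_{n,r}(P_{n+m,r+s}TP_{n,r}\xi\,|\,P_{n+m,r+s}TP_{n,r}\xi)\le\Vert T\Vert^2(\xi\,|\,\xi)$, whence $\Vert T_{m,s}\Vert\le\Vert T\Vert$.
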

\begin{rmk}
We are effectively repeating the argument 
of \cite[Lemmas 4.4 and 4.5]{careyetal11} for modular unitaries and
partial isometries. 
We know that $\ei^{it\D}\varphi(S)\ei^{-it\D}\in \varphi(\Pim)$ 
for partial isometries $S$ with range and source in $A$, and that is all
we will need.
\end{rmk}
\begin{proof}
We first suppose that in fact $v$ is unitary, and define $w_t=\varphi(v^*)\ei^{it\D}\varphi(v)\ei^{-it\D}$.
It follows from Lemma \ref{lem:chop-vee} that $w_t$ commutes (strongly) with $\D$
for all $t$.
Then
\begin{align*}
w_{t+s}&=\varphi(v^*)\ei^{i(t+s)\D}\varphi(v)\ei^{-i(t+s)\D}\\
&=\varphi(v^*)\ei^{it\D}\varphi(v)\ei^{-it\D}\ei^{it\D}\varphi(v^*)\ei^{-it\D}\ei^{i(t+s)\D}\varphi(v)\ei^{-i(t+s)\D}\\
&=w_t\,\ei^{it\D}\big(\varphi(v^*)\ei^{is\D}\varphi(v)\ei^{-is\D}\big)\ei^{-it\D}\\
&=w_t\,\ei^{it\D}w_s\ei^{-it\D}\\
&=w_t\,w_s.
\end{align*}
Hence $w_t$ is a norm continuous path of unitaries in $A$, whence $w_t=\ei^{ita}$ for some
$a=a^*\in A$. Thus $\ei^{it\D}\varphi(v)\ei^{-it\D}=\varphi(v)\ei^{ita}$. Recall now that we can
choose $\D$ to have only integral eigenvalues, and so $\varphi(v)=\varphi(v)\ei^{i2\pi a}$.
Hence $a$ has spectrum a finite subset of $\Z$, and we then easily see that only finitely many
components $v_{m,s}$ can be non-zero.
In the general case we replace $v$ by the unitary
$$
\begin{pmatrix} 1-v^*v & v^*\\ v & 1-vv^*\end{pmatrix}
$$
and argue as above.
\end{proof}

\begin{lemma}
\label{lem:vee-modular}
For any partial isometry $v$ over $\tilde{O_E}$ 
with range and source projections in $\tilde{A}$, the
operators $\varphi(v)\D \varphi(v^*)$ and $\varphi(v^*)\D\varphi(v)$ commute with 
both the kernel projection of $\D$ and the non-negative spectral projection of $\D$, given by $Q$.
\end{lemma}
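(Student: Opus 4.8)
The goal is to show that for a partial isometry $v$ over $\tilde{O_E}$ with range and source projections in $\tilde A$, the operators $\varphi(v)\D\varphi(v^*)$ and $\varphi(v^*)\D\varphi(v)$ commute with both $Q$ and $\ker\D$. By symmetry (replacing $v$ by $v^*$) it suffices to treat $\varphi(v)\D\varphi(v^*)$. The strategy is to use the homogeneous decomposition from Lemmas \ref{lem:chop-vee} and \ref{lem:finite-sum} to reduce everything to explicit bookkeeping with the projections $P_{n,r}$. First I would write $\varphi(v)=\sum_{m,s}v_{m,s}$, a \emph{finite} sum by Lemma \ref{lem:finite-sum}, where each $v_{m,s}$ satisfies the shift relation $v_{m,s}P_{n,r}=P_{n+m,r+s}v_{m,s}$ derived after Lemma \ref{lem:chop-vee}, and the $v_{m,s}^*v_{m',s'}$, $v_{m,s}v_{m',s'}^*$ are orthogonality-controlled as in Lemma \ref{lem:chop-vee}. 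Since $\D=\sum_{n,r}\psi(n,r)P_{n,r}$ and $v_{m,s}$ intertwines $P_{n,r}$ with $P_{n+m,r+s}$, one computes directly that
\[
\varphi(v)\D\varphi(v^*)=\sum_{m,s}v_{m,s}\D v_{m,s}^*=\sum_{m,s,n,r}\psi(n,r)\,v_{m,s}P_{n,r}v_{m,s}^*,
\]
using the orthogonality relations to kill the cross terms, and $v_{m,s}P_{n,r}v_{m,s}^*=P_{n+m,r+s}v_{m,s}v_{m,s}^*=P_{n+m,r+s}(v_{m,s}v_{m,s}^*)$ lands inside the $(n+m,r+s)$-spectral subspace of $\D$.

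The key point is then that $\D$, $Q$ and $\ker\D$ are all \emph{functions} of the family of commuting projections $\{P_{n,r}\}$: explicitly $Q=\sum_{n\geq 0}P_{n,n}$ (recorded just before Theorem \ref{thm:kappa-ext}), $\ker\D$ is the sum of those $P_{n,r}$ with $\psi(n,r)=0$, and all three are strictly convergent sums of the $P_{n,r}$. So to prove the commutation statements it is enough to show that the operator $\varphi(v)\D\varphi(v^*)$ commutes with \emph{every} $P_{l,t}$, because then it commutes with any strictly convergent sum of them. From the display above, $\varphi(v)\D\varphi(v^*)$ is a (finite) sum of operators of the form $P_{n+m,r+s}(v_{m,s}v_{m,s}^*)$; since $v_{m,s}v_{m,s}^*$ is a projection over $\tilde A$ it commutes with each $P_{l,t}$ (the left $A$-action preserves the $\Xi_{n,r}$ decomposition, equivalently $\D$ commutes with $A$), and $P_{n+m,r+s}$ commutes with $P_{l,t}$ since the $P$'s are mutually orthogonal projections. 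Hence $\varphi(v)\D\varphi(v^*)$ commutes with all $P_{l,t}$, and therefore with $\D$, $Q$ and $\ker\D$.

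I would also note the small subtlety that $v$ a priori lives over $\tilde{O_E}$, so one should either pass to the unitization (where $\varphi$ extends and the $P_{n,r}$ decomposition of $\phimod$ still makes sense with $\tilde A$ acting compatibly) or, as in Lemma \ref{lem:finite-sum}, replace the partial isometry $v$ by the unitary $\begin{pmatrix}1-v^*v & v^*\\ v & 1-vv^*\end{pmatrix}$ over $M_2(\tilde{O_E})$, run the argument there with $P_{n,r}\ox 1_2$, and then restrict. Either route is routine. The only genuine content is the observation already extracted after Lemma \ref{lem:chop-vee}, namely the clean shift relation $v_{m,s}P_{n,r}=P_{n+m,r+s}v_{m,s}$ together with the orthogonality of the $v_{m,s}$; granting those, the present lemma is essentially a formal consequence. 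I expect no real obstacle here — the main (already surmounted) difficulty was establishing the finite homogeneous decomposition in Lemma \ref{lem:finite-sum}; the remaining work is just assembling these pieces and being careful that the manipulations of strictly convergent sums are legitimate, which they are because the sums in question are actually finite after compressing by any fixed $P_{l,t}$.
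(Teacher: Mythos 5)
Your proposal is correct and follows essentially the same route as the paper: decompose $\varphi(v)$ into its finitely many homogeneous components $v_{m,s}$ via Lemmas \ref{lem:chop-vee} and \ref{lem:finite-sum}, use the shift relation $v_{m,s}P_{n,r}=P_{n+m,r+s}v_{m,s}$ and the orthogonality of the components to write $\varphi(v)\D\varphi(v^*)$ as a sum of terms $\psi(n,r)P_{n+m,r+s}\,v_{m,s}v_{m,s}^*$, and conclude since each such term commutes with every $P_{l,t}$ and hence with $Q$ and $\ker\D$. The paper's proof is exactly this computation, stated slightly more tersely.
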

\begin{proof}
We assume for simplicity that $v\in \tilde{O_E}$.
Using Lemmas \ref{lem:chop-vee} and \ref{lem:finite-sum}, we see that
the following computation is justified and yields the first claim:
\begin{align*}
v\D v^*=\sum_{m,s} vP_{m,s}\psi(m,s)v^*
&=\sum_{m,s,n,r} v_{n,r}P_{m,s}\psi(m,s)v^*
=\sum_{m,s,n,r}P_{m+n,s+r}\psi(m,s)v_{n,r}v^*\\
&=\sum_{m,s,n,r}P_{m+n,s+r}\psi(m,s)v_{n,r}v^*_{n,r}.
\end{align*}
The claims about $v^*\D v$ follow in the same way.
\end{proof}

\section{The isomorphism $K_*(M(A,O_E))\to K_*(A)$ and the $KK$-equivalence}
To prove that $\cdot\otimes_M\hat{\D}:\,K_*(M(A,O_E))\to K_*(A)$ 
gives an isomorphism we need only show 
that $\cdot\otimes_M\hat{\D}$ makes the diagram commute, 
since it then follows from the five lemma that taking 
the Kasparov product with $\hat{\D}$ is an isomorphism. 

To prove that $\cdot\otimes_M\hat{\D}:\,K_*(M(A,O_E))\to K_*(A)$ yields a 
$KK$-equivalence requires much more in general, but follows relatively easily in the boostrap
case. We go further, and provide an explicit inverse when it exists, and conjecture that
in fact it is an inverse in all generality.

\subsection{The isomorphism in $K$-theory}

We now know enough to prove the commutation of the diagram in Equation \eqref{eq:diagram}.
We first consider $\cdot\ox_{M}[\hat\D]:\,K_0(M(A,O_E))\to K_0(A)$, and in this
situation begin by considering $v=v_{m,s}$ `homogenous'.  

By \cite[Lemma 2.14]{kappa}, the range of $Q$ is the range of $\sum_{n\geq 0}P_{n,n}$, hence $QvQ=0$ unless $s=m$. 
For $s=m$ we have
\begin{align*}
&{\rm Index}(QvQ:v^*vF_E\to vv^*F_E)\ox_A({\rm Id}_{KK(A,A)}-[E])\\
&=\begin{cases}
\left([\oplus_{j=0}^{-m-1}v^*vE^{\ox j}]\right)\ox_A([{\rm Id}_{KK(A,A)}]-[E]) & m<0 \\
\left(-[\oplus_{j=0}^{m-1}vv^*E^{\ox j}]\right)\ox_A([{\rm Id}_{KK(A,A)}]-[E]) & m\geq0 \\
\end{cases} \\
&=\left\{\begin{array}{ll} {[v^*vA]} - {[v^*vE^{\ox -m}]}  & m<0\\  
{-[vv^*A]} +{[vv^*E^{\ox m}]}  & m\geq0\end{array}\right.
\end{align*}
where the last equality follows from a telescopic argument.
So to prove that
$$
{\rm Index}(QvQ:v^*vF_E\to vv^*F_E)\ox_A({\rm Id}_{KK(A,A)}-[E])
=\ev_*([v])=[v^*v]-[vv^*]
$$
we are reduced to proving the isomorphisms of $A$-modules
$$
\begin{aligned}[]
vv^*E^{\otimes m} &\simeq v^*vA  &\mbox{for }m>0\quad\mbox{and}\quad  
v^*vE^{\otimes |m|} & \simeq vv^*A &\mbox{for }m<0.
\end{aligned}
$$
This is straightforward though, by the following argument.

For $m<0$, the map $v:v^*vE^{\otimes|m|}\to vE^{\otimes|m|}\subset A$ is a one-to-one
$A$-module map, which is onto its image, which is contained in $vv^*A$. Hence
$v^*vE^{\otimes|m|}$ and $vE^{\otimes|m|}$ are isomorphic.

For $m>0$, the map $v^*:vv^*E^{\otimes|m|}\to v^*E^{\otimes |m|}\subset A$ is a one-to-one
$A$-module map, which is onto its image, which is contained in $v^*vA$. Hence
$vv^*E^{\otimes|m|}$ and $v^*E^{\otimes|m|}$ are isomorphic.

Thus the result is true for homogenous partial isometries, and likewise for 
direct sums of homogenous partial isometries, and by Lemma \ref{lem:finite-sum} this
is enough.
This gives commutativity of the diagram 
\[
\xymatrix{ K_0(M) \ar[rr]^{\ev_{*}} \ar[d]^{\cdot\ox[\hat\D]} &\ \ & K_0(A) \ar[d]^{=} \\
K_0(A) \ar[rr]_{\cdot\ox ({\rm Id}_A-[E])} &\ \ & K_0(A)}
\]
and hence an isomorphism $\cdot\ox_M[\hat\D]:K_0(M(A,O_E))\to K_0(A)$. 
To complete the argument, we need to consider suspensions. 

If $f\in \S M(A,O_E)$ we let $f(t)=g_t$ with $g_t\in M(A,O_E)$ for all
$t\in\R$. Then define
$$
\Psi:\S M(A,O_E)\to M(\S A, \S O_E),\quad (\Psi(f)(s))(t)=g_t(s),\ \ s\in[0,\infty),\ \ t\in\R,
$$
and check that $\Psi$ is an isomorphism. Hence,
in particular, $K_1(M(A,\O_E))\cong K_0(M(\S A,\S O_E))$.

Next we observe that $O_{\S E}\cong \S O_E$. The isomorphism is defined
on generators by $\varphi(S_{f\ox e})=f\ox S_e$, and using the 
gauge invariant uniqueness theorem, as in \cite[Theorem 6.4]{Katsura}, we see that the map is injective, 
and then since the range contains the generators of $\S O_E$, it is an isomorphism.

The unitary isomorphism 
of Kasparov modules $(\S A,\S E_{\S A},0)=(C_0(\R),C_0(\R)_{C_0(\R)},0)\ox_\C(A,E_A,0)$
shows that the suspension of the map $\cdot\ox_A (\,(A,A_A,0)-(A,E_A,0)\,)$ is the map
$\cdot\ox_{\S A}(\,(\S A,\S A_{\S A},0)-(\S A,\S E_{\S A},0)\,)$. A similar but
easier statement holds for the suspension of the evaluation map, and so combining these
various facts we
find that the diagram 
\[
\xymatrix{ K_1( M(A,O_E)) \ar[rr]^{\ \ \ \ \ \ \S\ev_{*}} \ar[d]^{\cdot \ox[\hat\D]} & \ \ &K_1(A) \ar[d]^{=} \\
K_1(A) \ar[rr]_{\S \ox ({\rm Id}_A-[E])} &\ \ & K_1(A)}
\]
is given by
\[
\xymatrix{ K_0(M(\S A,\S O_E)) \ar[d]^{\cdot \ox[\hat\D]}  \ar[rr]^{\ \ \ \ \ \ \ev_{*}}&\ \ & K_0(\S A) \ar[d]^{=} \\
K_0(\S A) \ar[rr]_{\ox ({\rm Id}_{\S A}-[\S E])} &\ \ & K_0(\S A)},
\]
where now ${\rm ev}_*$ is the evaluation map corresponding to the inclusion
of $\S A$ into $\S O_E$.
Now by Propositions \ref{prop:suspend} and \ref{prop:suspend-assumps}, 
$\S E$ satisfies all the assumptions that $E$ does. Thus our proof that the `even part' 
of the diagram commutes now holds verbatim to show that the `odd part' of the diagram commutes.

\subsection{The $KK$-equivalence and the main theorem}
\label{sec:KK-equiv}

We conclude by showing that the class of $[\hat{\D}]$ 
not only implements an isomorphism in $K$-theory, but an actual 
$KK$-equivalence when $A$ is in the bootstrap class. 

First observe that by {\cite[Proposition 23.10.1]{Blackadar}}, 
if $A,\, B$ are two \Cs algebras in the bootstrap class, then $\alpha \in KK(A,B)$ 
is a $KK$-equivalence if and only if the induced map 
$\cdot\otimes_A \alpha : K_*(A) \to K_*(B)$ is invertible. 
This follows from the Universal Coefficient Theorem of \cite{RoSch}. 

Next, whenever the coefficient algebra $A$ of the correspondence $E_A$ 
belongs to the bootstrap class, 
so does the algebra $\Pim$ (cf. \cite[Proposition 8.8]{Katsura}), and we obtain 
a $KK$-equivalence in this case.

Hence, provided that the coefficient algebra is contained in the bootstrap class,
the class $[\hat\D]\in KK(O_E,A)$ is a $KK$-equivalence. The problem with this
approach is two-fold. On the one hand we need to know or verify that the coefficient algebra
is in the bootstrap class. While this is often possible, knowing that $\hat\D$ is a $KK$-equivalence
does not provide a representative of the other half of the equivalence.

We ameliorate both these problems by providing an explicit representative 
for the other half of the $KK$-equivalence, when it is one.

In our particular situation, we can choose a 
countable frame $\{e_i\}_{i \geq 1}$ for the right module
$E_A$, and define the (possibly infinite) matrix over $\widetilde{O_E}$ by
$$
w=
\begin{pmatrix}S_{e_1}^* & 0 & \cdots & 0 & \\ S_{e_2}^* &  0 & \cdots & 0\\ 
\vdots & \vdots & \ddots & \vdots \end{pmatrix}.
$$
Then $w^*w={\rm Id}_{\widetilde{O_E}} \oplus 0_{\infty}
=\iota_{A,O_E}({\rm Id}_{\widetilde{A}}) \oplus 0_{\infty}$ and 
$$
ww^*=\begin{pmatrix} (e_1|e_1)_A &(e_1|e_2)_A & \cdots \\
(e_2|e_1)_A  & (e_2|e_2)_A & \cdots\\
\vdots & \ddots & \vdots
\end{pmatrix}= (e_i|e_j)_{i,j \geq 1} = : p_E\in M_{\infty}(A).
$$
When the left action of $A$ on $E$ is injective and $E$ has finite right Watatani index, 
the projection $p_E$ lies in $\widetilde{\iota_{A,O_E}(A)}$, 
and so $w$ defines a class in $K_0(M)$.
We will prove this fact below.

We can explicitly realise $[w]$ as a difference of classes of projections 
over $\tilde{M}(A,O_E)$.\footnote{Here  $\tilde{M}(A,O_E)$ is the minimal unitisation of $M(A,O_E)$.
As usual, the equality of the classes of $p_w(\infty)$ and $1_N$ gives us classes
in the $KK$ groups for $M(A,O_E)$. See \cite[Corollary 1, Section 7]{KasparovTech}} 
Making the identification $[w]=[p_w]-[1_N]$, we have
$$
p_w(t)=
\begin{pmatrix} 1_N-\frac{1}{1+t^2}p_E & \frac{-it}{1+t^2}w\\ 
\frac{it}{1+t^2}w^* & \frac{1}{1+t^2}{\rm Id}_{O_E}\end{pmatrix}
=
\begin{pmatrix} \frac{1}{1+t^2}(1_N-p_E)+\frac{t^2}{1+t^2}1_N & \frac{-it}{1+t^2}w\\ 
\frac{it}{1+t^2}w^* & \frac{1}{1+t^2}{\rm Id}_{O_E}\end{pmatrix}.
$$
The frame $\{e_i\}_{i \geq 1}$ gives a stabilisation map 
$\psi:\,E\to H_A=H\ox A$ (for any separable Hilbert space $H$) 
by defining $\psi(e)=((e_j|e)_A)_{j\geq 1}$.
Since $E_A$ carries a left action of $A$, say $\phi:A\to \End_A(E)$, 
so too does $p_EH_A$ with $\psi\circ\phi(a)\circ\psi^{-1}=((e_i|\phi(a)e_j)_A)_{i,j}$. 

There are two important features of the representation of $A$ on $p_EH_A$. The first
is that $w$ defines a class in $K_0(M(A,O_E))$. Let $(u_n)_{n\geq 1}$ be an approximate
identity for $A$, and recall that $A$ acts injectively and by compacts to see 
that $((e_i|\phi(u_n)e_j)_A)_{i,j}$ converges strictly to 
$p_E$. Extending the representation of $A$ to $\widetilde{A}$, we see that
$p_E=\psi\circ\phi(a)\circ\psi^{-1}({\rm Id}_{\widetilde{A}})$. Thus $p_E$ is a class over
$\widetilde{A}$.

The second important feature is the ability to
inflate from $K$-theory classes to $KK$-classes.
Since $w(0_{N-1}\oplus\phi(a))w^*=\psi\circ\phi(a)\circ\psi^{-1}$ and
$w^*\psi\circ\phi(a)\circ\psi^{-1}w=0_{N-1}\oplus\phi(a)$, it is straightforward
to check that for all $t\in [0,\infty)$
$$
p_w(t)\begin{pmatrix} ((e_i|\phi(a)e_j)_A)_{i,j} & 0\\ 0 & \phi(a)\end{pmatrix}
=\begin{pmatrix} ((e_i|\phi(a)e_j)_A)_{i,j} & 0\\ 0 & \phi(a)\end{pmatrix}p_w(t)
$$
as operators on $O_E^{2N}$ (or $A^{2N}$ for $t=0$). 
Hence we can enrich the $K$-theory class
$$
[w]=\left[\left(\C,\begin{pmatrix} p_w\tilde{M}(A,O_E)^{2N}\\ \tilde{M}(A,O_E)^N\end{pmatrix}, 0\right)\right]
\in KK(\C,M(A,O_E))
$$
to a class
$$
[W]=\left[\left(A,\begin{pmatrix} p_w\tilde{M}(A,O_E)^{2N}\\ \tilde{M}(A,O_E)^N\end{pmatrix}, 0\right)\right]
\in KK(A,M(A,O_E)).
$$

These two classes are related, via the natural inclusion $\iota_{\C,A}: \C \to A$, by
$
[w]=\iota_{\C,A}^*\left([W]\right).
$
\begin{lemma}
\label{lem:pairing1}
Let $[\D]\in KK^1(O_E,A)=KK(\S O_E,A)$ be the class of the defining extension for $O_E$,
$(M(A,O_E),\hat\Xi_A,\hat\D)$ the lift to the mapping cone, and
$[W]\in KK(A,M(A,O_E))$ the class defined above. Then
$$
[W]\ox_{M(A,O_E)}[\widehat\D]=-\Id_{KK(A,A)}.
$$
\end{lemma}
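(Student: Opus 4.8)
The plan is to compute the Kasparov product $[W]\ox_{M(A,O_E)}[\widehat\D]$ explicitly and identify it with $-\Id_{KK(A,A)}$ by an index computation. The right-hand factor $[\widehat\D]$ is the mapping-cone lift of the extension class, for which \cite[Theorem~5.1]{CPR1} (quoted above) gives a concrete index formula; the left-hand factor $[W]$ is represented, on generators, by the partial isometry $w$ with $w^*w=\iota_{A,O_E}(\Id_{\widetilde A})\oplus 0_\infty$ and $ww^*=p_E$. Since $[W]$ is the inflation of the $K$-theory class $[w]=[p_w]-[1_N]\in K_0(M(A,O_E))$, and since the pairing of $[w]$ with $[\widehat\D]$ is, by the cited theorem, $-\,\mathrm{Index}(Q_\infty w Q_\infty:\,w^*wF_E\to ww^*F_E)$, I would first verify the hypotheses of \cite[Theorem~5.1]{CPR1} for $v=w$: namely that $Q$ and $\ker\D$ commute with $w\D w^*$ and $w^*\D w$. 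This is exactly Lemma \ref{lem:vee-modular} applied to the partial isometry $w$ over $\widetilde{\mathcal K\ox O_E}$ (the remark after Lemma \ref{lem:homog} notes the $K_0(M)$ picture is insensitive to passing to $\mathcal K\ox O_E$), so this step is essentially free.

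Next I would compute the operator $Q_\infty w Q_\infty$. Writing $w=(S_{e_i}^*)$ as a column, $w$ is homogeneous of degree $-1$ in the sense of Lemma \ref{lem:chop-vee}: $\varphi(w)$ shifts the grading by $-1$, so by the computation already carried out in Section 5.1 (the case $m=-1$ of the index calculation there), $Q w Q$ restricted to $w^*wF_E=F_E$ has kernel $w^*wA=A$ (the degree-zero part, which $w$ annihilates) and is surjective onto $ww^*F_E$, so that
\[
\mathrm{Index}(Q_\infty w Q_\infty:\,w^*wF_E\to ww^*F_E)=[w^*wA]=[A_A]=\Id_{KK(A,A)}.
\]
Indeed $w^*w=\iota_{A,O_E}(\Id_{\widetilde A})$ so $w^*wA\cong A$ as a right $A$-module, giving the class $[(A,A_A,0)]=\Id_{KK(A,A)}$. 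Hence the product is $-\Id_{KK(A,A)}$, which is the claim. The suspension/degree bookkeeping here mirrors exactly the telescopic argument already used in Section 5.1, so it can be cited rather than repeated.

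The main obstacle I expect is not the index arithmetic but the \emph{bivariant} upgrade: \cite[Theorem~5.1]{CPR1} is stated for the numerical index pairing, i.e. for products landing in $K_0(A)$, whereas we need the equality in $KK(A,A)$. So the real work is to check that the left action of $A$ on $p_EH_A$ (the module underlying $[W]$) is compatible with the index computation — precisely, that $w$ intertwines the representation $0_{N-1}\oplus\phi(a)$ with $\psi\circ\phi(a)\circ\psi^{-1}$, which is recorded just before the Lemma statement, and that this intertwining is preserved by the grading decomposition of Lemma \ref{lem:chop-vee} so that the kernel of $Q w Q$ is $A$ \emph{as an $A$-$A$ bimodule}, not merely as a right module. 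Once this $A$-equivariance is in hand, the Kasparov product is computed componentwise exactly as in the $K$-theory case and the identification of $w^*wA$ with $(A,A_A,0)$ as a $KK(A,A)$-class is immediate. I would therefore structure the proof as: (i) cite Lemma \ref{lem:vee-modular} to get the hypotheses of \cite[Theorem~5.1]{CPR1}; (ii) identify $Q w Q$ via the degree-$(-1)$ case of the Section 5.1 computation, now tracking the left $A$-action through the intertwiner $w$; (iii) conclude $[W]\ox_{M(A,O_E)}[\widehat\D]=-[(A,A_A,0)]=-\Id_{KK(A,A)}$.
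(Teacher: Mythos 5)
Your proposal is correct and follows essentially the same route as the paper: apply \cite[Theorem 5.1]{CPR1} to $w$, identify $\ker(QwQ)=E^{\ox 0}=A$ and the trivial cokernel, and upgrade the $K_0(A)$-valued index to a class in $KK(A,A)$ by observing that the left $A$-action commutes with $\D$ and hence with the non-negative spectral projection. Your extra care in verifying the hypotheses via Lemma \ref{lem:vee-modular} and in tracking the bimodule structure of the kernel is exactly the content the paper compresses into its remark that the index may be read as a difference of $A$-bimodules.
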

\begin{proof}
Applying \cite[Theorem 5.1]{CPR1} gives 
\begin{align*}
[w]\ox_{M(A,O_E)}[\widehat{\D}]
=-{\rm Index}\big(P\ox1_NwP\ox 1_N:w^*w(\Xi)^N\to ww^*(\Xi)^N\big)
\end{align*}
where $P$ is the non-negative spectral projection of $\D$. 
Since the 
non-negative spectral projection of $\D$ 
is the projection onto a copy of the Fock space, we have 
$$
\ker(P\ox1_NwP\ox1_N)=A_A=E^{\ox 0}_A,\qquad \ker(P\ox1_Nw^*P\ox1_N)=\{0\}.
$$
We can interpret 
the index not just as a difference of right $A$-modules, but as a difference
of $A$-bimodules. This works because the 
left action of $A$ commutes with $\D$ and so $P$. Hence
$$
[W]\ox_{M}[\widehat{\D}]=-[(A,A_A,0)]=-\Id_{KK(A,A)}
$$
as was to be shown.
\end{proof}
From Lemma \ref{lem:pairing1}, we know that 
$-[\widehat\D]\ox_A[W]\in KK(M(A,O_E),M(A,O_E))$
is an idempotent element. {\em In particular, $[\widehat{\D}]\ox_A\cdot$ is always injective and
$[W]\ox_M\cdot$ is always surjective and injective on the image of $[\widehat{\D}]\ox_A\cdot$.
Thus as soon as $[\widehat{\D}]\ox_A\cdot$ is surjective, $[\widehat{\D}]$ is a $KK$-equivalence. Similarly, the map $\cdot\ox_A[W]$ is always injective and surjective on the range 
of $\cdot\ox_M[\hat\D]$.}

One approach to showing that $[W]$ is in fact an inverse for $[\widehat\D]$
would be to show that the diagram
\begin{align}\xymatrix{\cdots \ar[r]^{\iota_{*}}&K_0(\Pim) \ar[r]^-{j_*^B} 
&K_{1}(M) \ar[r]^-{\ev_*} &
K_{1}(A) \ar[r]^{\iota_{*}}& K_1(\Pim) \ar[r]^-{j_*^B}
&K_{0}(M) \ar[r]^-{\ev_*}&
K_{0}(A) \ar[r]^{\iota_{*}}&\cdots \\
\cdots \ar[r]^{\iota_{*}}&K_0(\Pim) \ar[u]^{=}\ar[r]^-{\partial} &K_{1}(A) \ar[u]^{\cdot \ox [W]}\ar[r]^-{1-[E]} &
K_{1}(A) \ar[u]^{=}\ar[r]^{\iota_{*}}&K_1(\Pim) \ar[u]^{=} \ar[r]^-{\partial} &K_{0}(A)  \ar[u]^{\cdot \ox [W]}\ar[r]^-{1-[E]} &
K_{0}(A) \ar[u]^{=}\ar[r]^{\iota_{*}}&\cdots }
\label{eq:diagramW}
\end{align}
commutes. The composition $[W]\ox_{M(A,O_E)}[{\rm ev}]$ is just the module
$$
\left[\left(A,\begin{pmatrix} (1_N-p_E)\tilde{A}^{N}\\ A \\ 
\tilde{A}^N\end{pmatrix}, 0\right)\right]
$$
with grading $(1_N-p_E)\oplus 1 \oplus -1_N$. So 
$[W]\ox_{M(A,O_E)}[{\rm ev}]=[A]-[p_EA^N]=[A]-[E]$.
Thus $-[W]$ makes one square commute, and we could try
to show that the square to the left of $W$ commutes (up to sign) as well.

This means showing that $-[\D]\ox_A[W]=[j]\in KK(\S O_E,M(A,O_E))$, which is implied by 
the stronger condition $-[\widehat\D]\ox_A[W]={\rm Id}_{KK(M,M)}\in KK(M(A,O_E),M(A,O_E))$.
We have not been able to prove this equality in general, and leave it as an open problem.

%
%
%
%
%
If $A$ is in the bootstrap class, then so too is $O_E$ and so $M(A,O_E)$.
In this case $\cdot\ox_M[\hat\D]$ is an isomorphism, hence the map $\cdot\ox_A[W]$, which is always injective on the range 
of $\cdot\ox_M[\hat\D]$, is an isomorphism as well: the inverse of $\cdot\otimes_M[\hat\D]$.

\begin{thm}
\label{thm:KK-equivalence}
Let $E$ be a bi-Hilbertian $A$-bimodule of finite right Watatani index, full as a right module
with injective left action, and satisfying Assumptions \ref{ass:one} and \ref{ass:two} 
on pages 8 and 9 respectively. Let
$(O_E,\phimod,\D)$ be the unbounded representative of the defining extension of $O_E$,
and $(M(A,O_E),\hat\Xi_A,\hat\D)$ the lift to the mapping cone. Then
$$
\cdot\ox_{M(A,O_E)}[(M(A,O_E),\hat\Xi_A,\hat\D)]:\,K_*(M(A,O_E))\to K_*(A)
$$
is an isomorphism that makes diagrams in $K$-theory commute. If furthermore the 
algebra $A$ belongs to the bootstrap class, the Kasparov product with the class
$[(M(A,O_E),\hat\Xi_A,\hat\D)] \in KK(M(A,O_E),A)$ is a $KK$-equivalence.
%
%
Together with the identity map, 
$\cdot\ox_{M(A,O_E)}[(M(A,O_E),\hat\Xi_A,\hat\D)]$ induces
an isomorphism of $KK$-theory exact sequences. 
\end{thm}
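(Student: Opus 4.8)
The plan is to assemble what was established in Sections~5.1 and~5.2; the theorem is essentially the bookkeeping statement that the commuting ladder built there has the consequences one expects. First I would record that both extensions — the defining extension \eqref{eq:ext} and the mapping cone extension $0\to\S\Pim\to M(A,\Pim)\to A\to 0$ with quotient map $\ev$ — are semi-split, so each induces a six-term $K$-theory sequence, and these form the rows of the ladder \eqref{eq:diagram} in which the vertical maps are the identity on every $K_*(\Pim)$ and every $K_*(A)$ term and $\cdot\ox_{M(A,\Pim)}[\hat\D]$ on the two $K_*(M(A,\Pim))$ terms. The square to the left of each occurrence of $[\hat\D]$ commutes because $\partial'=-\bott\circ\iota_{A,\Pim*}$ by \cite[Lemma~3.1]{CPR1} and $j^{B*}[\hat\D]=[(\O_E,\phimod,\D)]$ (which uses $\iota_{A,\Pim}^*[(\O_E,\phimod,\D)]=0$, valid since $\D$ commutes with the left $A$-action); the square to the right commutes by the computation of Section~5.1, where \cite[Theorem~5.1]{CPR1}, Lemmas~\ref{lem:chop-vee} and~\ref{lem:finite-sum}, the telescoping identity and the module isomorphisms $vv^*E^{\ox m}\simeq v^*vA$ and $v^*vE^{\ox|m|}\simeq vv^*A$ identify $\ev_*$ with $(1-[E])\circ(\cdot\ox[\hat\D])$.

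With the ladder commuting and the vertical arrows adjacent to each $K_*(M(A,\Pim))$ term equal to the identity, the five lemma makes $\cdot\ox_{M(A,\Pim)}[\hat\D]\colon K_0(M(A,\Pim))\to K_0(A)$ an isomorphism; the assertion that it makes the diagrams in $K$-theory commute is exactly what was just used. For degree one I would pass to suspensions: the isomorphisms $\S M(A,\Pim)\cong M(\S A,\S\Pim)$ and $O_{\S E}\cong\S\Pim$ reduce the $K_1$ statement to the $K_0$ statement for the $\S A$-bimodule $\S E$, which by Propositions~\ref{prop:suspend} and~\ref{prop:suspend-assumps} again has finite right Watatani index, is right full with injective left action, and satisfies Assumptions~\ref{ass:one} and~\ref{ass:two}; so the even argument applies verbatim, and the dual $K$-homology statement is obtained the same way.

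For the bootstrap clause I would argue as follows. If $A$ is in the bootstrap class, so is $\Pim$ by \cite[Proposition 8.8]{Katsura}, and therefore so is $M(A,\Pim)$, being an extension of the bootstrap algebra $A$ by the bootstrap algebra $\S\Pim$. By \cite[Proposition 23.10.1]{Blackadar} — a consequence of the UCT of \cite{RoSch} — a class in $KK(M(A,\Pim),A)$ between bootstrap algebras is a $KK$-equivalence if and only if it induces an isomorphism on $K$-theory, which is the first part of the theorem. One can even name the inverse: Lemma~\ref{lem:pairing1} gives $[W]\ox_{M(A,\Pim)}[\hat\D]=-\Id_{KK(A,A)}$, so $\cdot\ox_A[W]$ is injective on the range of $\cdot\ox_{M(A,\Pim)}[\hat\D]$, and once that range is all of $K_*(A)$ the class $-[W]$ is the inverse. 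Finally, combining the $K$-theory isomorphism (always valid), its $K$-homology dual, and the semi-splitness of both extensions yields an isomorphism of the associated six-term exact sequences in which two of every three vertical maps are identities; when $A$ is in the bootstrap class the remaining map is a $KK$-equivalence, so one even obtains an isomorphism of the full $KK$ six-term sequences with arbitrary coefficients.

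The only genuinely delicate ingredient in the proof of the theorem is the index computation of Section~5.1 underlying the right-hand squares — everything else is routine assembly and the suspension identifications. The point left open, and, as the paper stresses, not needed here, is whether $-[W]$ inverts $[\hat\D]$ without the bootstrap hypothesis, equivalently whether $-[\hat\D]\ox_A[W]=\Id_{KK(M(A,\Pim),M(A,\Pim))}$.
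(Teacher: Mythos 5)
Your proposal assembles exactly the ingredients the paper uses, in the same order: commutativity of the ladder \eqref{eq:diagram} via $j^{B*}[\hat\D]=[(\O_E,\phimod,\D)]$ on the left-hand squares and the homogeneous-partial-isometry index computation on the right, the five lemma, the suspension reduction (via Propositions \ref{prop:suspend} and \ref{prop:suspend-assumps} and $O_{\S E}\cong\S O_E$) for the odd case, and the UCT/bootstrap argument with $-[W]$ and Lemma \ref{lem:pairing1} supplying the inverse. This coincides with the paper's own (distributed) proof, so there is nothing further to add.
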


Applying the result to the graph $C^*$-algebra of a locally finite directed graph
with no sources nor sinks yields a well-known exact sequence for computing the $K$-theory.
With $G=(G^0,G^1,r,s)$ the directed graph ($G^0=$vertices, $G^1$=edges),
$A=C_0(G^0)$ and $E=C_0(G^1)$ we have $O_E=C^*(G)$ and we have the exact sequence.
$$
\xymatrix{ 0\ar[r] & K_1(C^*(G))\ar[r]& K_0(M(A,C^*(G)))\ar[r]^-{\ev_*}& K_0(A)\ar[r] &K_0(C^*(G)) \ar[r]& 0}.
$$
Using $K_0(A)=\oplus^{\vert G^0 \vert }\Z$, together with the isomorphism $K_0(M(A,C^*(G)))\cong K_0(A)$ given
by $\hat\D$, gives
$$
\xymatrix{ 0\ar[r] & K_1(C^*(G))\ar[r]& \bigoplus^{\vert G^0 \vert }\Z\ar[r]^-{1-V^T}& 
\bigoplus^{\vert G^0 \vert }\Z\ar[r] &K_0(C^*(G)) \ar[r]& 0}.
$$
where $V$ is the vertex matrix of the graph $G$, given by 
$$
V(i,j) := \{e \in G^1 :\, s(e)=v_i,\  r(e)=v_j\}.
$$
Similarly, since $A=C_0(G^0)$ is in the bootstrap class, in $K$-homology we find
$$
\xymatrix{ 0\ar[r] & K^0(C^*(G))\ar[r]&\prod^{\vert G^0 \vert }\Z\ar[r]^-{1-V}& 
\prod^{\vert G^0 \vert }\Z\ar[r] &K^1(C^*(G)) \ar[r]& 0},
$$
and these results recapture the results of \cite{DT} for non-singular graphs.

\end{document}